\newcommand{\s}{{\sigma}}
\renewcommand{\l}{\lambda}
\newcommand{\U}{\mathcal{U}}
\newcommand{\beqs}{\begin{equation*}}
\newcommand{\eeqs}{\end{equation*}}
\numberwithin{equation}{section}
 \theoremstyle{plain}
\newtheorem{theorem}{Theorem}[section]
\newtheorem{corollary}[theorem]{Corollary}
\newtheorem{conjecture}[theorem]{Conjecture}
\theoremstyle{remark}
\theoremstyle{empty}
\begin{document}

\makeatletter
\def\imod#1{\allowbreak\mkern10mu({\operator@font mod}\,\,#1)}
\makeatother

\author{Alexander Berkovich}
   \address{Department of Mathematics, University of Florida, 358 Little Hall, Gainesville FL 32611, USA}
   \email{alexb@ufl.edu}

\author{Ali Kemal Uncu}
   \address{Department of Mathematics, University of Florida, 358 Little Hall, Gainesville FL 32611, USA}
   \email{akuncu@ufl.edu}

\title[Some Elementary Partition Inequalities and Their Implications]{Some Elementary Partition Inequalities and Their Implications}



\begin{abstract} 

We prove various inequalities between the number of partitions with the bound on the largest part and some restrictions on occurrences of parts. We explore many interesting consequences of these partition inequalities. In particular, we show that for $L\geq 1$, the number of partitions with $l-s \leq L$ and $s=1$ is greater than the number of partitions with $l-s\leq L$ and $s>1$. Here $l$ and $s$ are the largest part and the smallest part of the partition, respectively.

\end{abstract}   
   
\keywords{Partition Inequalities, Partitions with bounded differences between largest and smallest parts, Non-negative $q$-series expansions, Injective maps, $q$-Binomial Theorem, Heine Transformations, Jackson Transformation}

 \subjclass[2010]{05A15, 05A17, 05A19, 05A20, 11B65, 11P81, 11P84, 33D15}

\date{\today}
   
\maketitle

\section{Introduction}

Let $\pi = (1^{f_1},2^{f_2},\dots)$ be a sequence, where all exponents $f_i \in \mathbb{Z}_{\geq 0}$ and all but finitely many of them are zero. We call $\pi$ a \textit{partition} (shown in \textit{frequency representation} \cite{Theory_of_Partitions}), where the exponents $f_i$ are the number of occurrences of $i$. The numbers $i$ with non-zero frequencies in $\pi$ are called \textit{parts} of $\pi$. Since there are only finitely many non-zero frequencies in a partition $\pi$, the sum \[ |\pi| := \sum_{i\geq 1} i\cdot f_i \] is finite. This sum, $|\pi|$, is called the \textit{norm} of the partition $\pi$. To shorten the notation one can ignore the zero frequencies; we keep the option of writing any zero frequencies that need emphasizing. As an example $\pi = (1^4, 3^2,4^0,10^1)$ is a partition of $20$ (meaning $|\pi| = 20$), where $1$ appears as a part with frequency $4$, $3$ appears twice, $4$ is not a part, and part $10$ only appears once in $\pi$. The partition where all the frequencies are equal to zero is a conventional and unique partition of 0.

Let $\delta_{i,j}$ be the standard Kronecker delta function yielding 1 for $i=j$, and vanishing otherwise. We define sets $\mathcal{A}_{L,1}$ and $\mathcal{A}_{L,2}$ for integers $L\geq 1$.
\begin{enumerate}[i.]
\item Let $\mathcal{A}_{L,1}$ be the set of partitions with the smallest part being 1, where all the parts $\leq L+1$ and $f_L = \delta_{L,1}$,\\[-1.5ex]
\item and let $\mathcal{A}_{L,2}$ be the set of non-empty partitions where the parts are in the domain $\{2,3,\dots, L+1 \}$.
\end{enumerate}

These sets satisfy the following relation.

\begin{theorem}\label{Ali_comb_THM} For any $L\geq 2$ and $N\geq 1 $,\vspace{1mm}
\begin{equation}\label{Ali_comb_inequality}|\{\pi:\pi\in\mathcal{A}_{L,1},\ |\pi|=N\}|\geq|\{\pi:\pi\in\mathcal{A}_{L,2},\ |\pi|=N\}|.\end{equation}
\end{theorem}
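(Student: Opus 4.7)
The plan is to translate the combinatorial inequality into a generating-function statement and then establish it by an explicit norm-preserving injection. For $L\geq 2$, the condition $f_L=\delta_{L,1}=0$ forces every $\pi\in\mathcal{A}_{L,1}$ to have at least one $1$ and all other parts in $\{2,\dots,L-1,L+1\}$, so
\begin{equation*}
\sum_{\pi\in\mathcal{A}_{L,1}}q^{|\pi|}
=\frac{q}{1-q}\prod_{i=2}^{L-1}\frac{1}{1-q^i}\cdot\frac{1}{1-q^{L+1}}
=\frac{q(1-q^L)}{1-q}\,g(q)
=(q+q^2+\cdots+q^L)\,g(q),
\end{equation*}
where $g(q):=\prod_{i=2}^{L+1}(1-q^i)^{-1}$ is the generating function for partitions with parts in $\{2,\dots,L+1\}$; similarly, $\sum_{\pi\in\mathcal{A}_{L,2}}q^{|\pi|}=g(q)-1$. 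Writing $a_n:=[q^n]g(q)$, Theorem~\ref{Ali_comb_THM} reduces to the purely numerical inequality
\begin{equation*}
a_N\;\leq\;\sum_{k=1}^{\min(L,N)}a_{N-k}\qquad(N\geq 1).
\end{equation*}

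Next I would prove this by constructing an injection $\Phi$ from partitions $\pi$ with parts in $\{2,\dots,L+1\}$ and $|\pi|=N$ into pairs $(k,\sigma)$ with $1\leq k\leq L$ and $\sigma$ a partition with parts in $\{2,\dots,L+1\}$ of norm $N-k$. Let $s$ be the smallest part of $\pi$. If $s\leq L$, set $\Phi(\pi):=(s,\pi\setminus\{s\})$, where one copy of $s$ is removed; this yields $k=s\in\{2,\dots,L\}$ and $\sigma$ with parts in $\{s,\dots,L+1\}\subseteq\{2,\dots,L+1\}$, so the pair is valid. The only case not covered is $s=L+1$, i.e.\ $\pi=((L+1)^m)$ for some $m\geq 1$; there I set $\Phi(\pi):=(1,((L+1)^{m-1},L))$. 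Since $L\geq 2$, the parts of the new $\sigma$ lie in $\{L,L+1\}\subseteq\{2,\dots,L+1\}$, and the norms match as $(m-1)(L+1)+L=N-1=N-k$.

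Injectivity is then straightforward: the generic rule always outputs $k\geq 2$, while the exceptional rule outputs $k=1$, so the two image families are disjoint. Within the generic rule, $(k,\sigma)$ recovers $\pi=\sigma\cup\{k\}$; within the exceptional rule, $\sigma$ contains exactly one part equal to $L$ and the remaining parts equal to $L+1$, so the multiplicity $m$, and hence $\pi$, are determined. The main obstacle is the correct handling of the ``all $(L+1)$'s'' partitions: the naive smallest-part rule would assign them to an out-of-range value $k=L+1$, so they must be redirected to the $k=1$ slot left unused by the generic rule. This slot is precisely the one manufactured by the identity $q(1-q^L)/(1-q)=q+q^2+\cdots+q^L$, and recognising it is the only nontrivial design choice; the remaining verifications are routine.
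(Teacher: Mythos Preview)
Your proof is correct and takes a genuinely different route from the paper's. The paper constructs a direct norm-preserving injection $\gamma:\mathcal{A}_{L,2}\to\mathcal{A}_{L,1}$, splitting into several cases according to the smallest part of $\pi$ and, in each case, dismantling all copies of $L$ together with one additional part into a batch of $1$'s whose residue modulo $L$ encodes the case; a separate ad hoc injection $\gamma^*$ handles $L=2$. You instead use the factorisation
\[
\sum_{\pi\in\mathcal{A}_{L,1}}q^{|\pi|}=(q+q^2+\cdots+q^L)\,g(q),\qquad g(q)=\prod_{i=2}^{L+1}(1-q^i)^{-1},
\]
to reduce the statement to $a_N\le\sum_{k=1}^{L}a_{N-k}$, and prove this by the very short injection ``remove the smallest part, unless all parts equal $L+1$, in which case replace one $L+1$ by an $L$ and record $k=1$''. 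Your argument is considerably more economical and has no need to separate $L=2$ from $L\ge 3$. What the paper's more elaborate map buys is an explicit description of the complement $\mathcal{A}_{L,1}\setminus\gamma(\mathcal{A}_{L,2})$; this is exploited in Section~\ref{Section_non_negativity} to rewrite $H_{L,1}(q)$ as a sum of manifestly non-negative pieces (equation~\eqref{H_secondary}) and to derive the summation formula of Theorem~\ref{summation_formula_F_recurr}. Your injection also has an easily described complement (pairs $(k,\sigma)$ with either $k=1$ and $\sigma\ne((L+1)^{m-1},L)$, or $k\ge 2$ and $s(\sigma)<k$), so the same kind of byproduct is available, though it corresponds to a different decomposition of $H_{L,1}(q)$.
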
\vspace{1mm}

Elementary combinatorial inequalities, such as \eqref{Ali_comb_inequality}, have interesting implications for $q$-series and the theory of partitions. This simple observation about the magnitude of sets, in this case, implies non-negativity results for a refinement of an earlier discussed weighted partition identity result \cite{BerkovichUncu4}. We introduce that result and its refinement here.

Let $\U$ be the set of partitions with positive norm. We define some natural partition statistics. Let
\begin{enumerate}[i.]
\item $s(\pi)$ denote the smallest part of the partition $\pi$,
\item $l(\pi)$ denote the largest part of $\pi$,
\item $\nu(\pi):= \sum_{i\geq 1} f_i$ denote the total number of parts in $\pi$,
\item $r(\pi):=l(\pi)-\nu(\pi)$, rank of $\pi$.
\end{enumerate}

In \cite{BerkovichUncu4}, we introduced a new partition statistics $t(\pi)$ to be the number defined by the properties
\begin{enumerate}[i.]
\item $f_i\equiv1\mod{2}$, for $1\leq i\leq t(\pi)$, 
\item $f_{t(\pi)+1}\equiv 0 \mod{2}$.
\end{enumerate}  Note that for any $\pi\in \U$ with an even frequency of 1 (where $f_1$ might be 0) we have $t(\pi)=0$. We will refer to $t(\pi)$ as the \textit{length of the initial odd-frequency chain}. With this new statistic the authors have proven a new combinatorial identity of partitions.

\begin{theorem}\label{Ordinary_Partitions_Combinatorial_Weighted_Theorem}
\begin{equation}\label{Ordinary_Partitions_Combinatorial_Weighted_Equation}
\sum_{\pi\in\U} (-1)^{s(\pi)+1} q^{|\pi|} = \sum_{\pi\in\U} t(\pi) q^{|\pi|}.
\end{equation}
\end{theorem}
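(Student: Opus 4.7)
The plan is to prove Theorem \ref{Ordinary_Partitions_Combinatorial_Weighted_Theorem} by translating both sides into generating functions and then establishing the resulting $q$-series identity via a short functional-equation argument.

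First, I translate the sums into explicit series. On the left, grouping $\pi\in\U$ by the value $k=s(\pi)$ and using that partitions with smallest part exactly $k$ have generating function $q^k/(q^k;q)_\infty$ yields
\begin{equation*}
\sum_{\pi\in\U}(-1)^{s(\pi)+1}q^{|\pi|}=\sum_{k\geq 1}(-1)^{k+1}\frac{q^k}{(q^k;q)_\infty}.
\end{equation*}
On the right, I use the fact that $t(\pi)$ equals the number of integers $j\geq 1$ with $f_1,f_2,\dots,f_j$ all odd, so the right-hand side is the generating function for pairs $(\pi,j)$ with this property. The generating function for partitions in which $f_1,\dots,f_j$ are all odd is $\prod_{i=1}^{j}q^i/(1-q^{2i})\cdot 1/(q^{j+1};q)_\infty=q^{j(j+1)/2}/\bigl((q^2;q^2)_j(q^{j+1};q)_\infty\bigr)$. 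Clearing the common factor $(q;q)_\infty$ and using $(q^2;q^2)_j=(q;q)_j(-q;q)_j$, the claim reduces to
\begin{equation*}
\sum_{k\geq 1}(-1)^{k+1}q^k(q;q)_{k-1}=\sum_{j\geq 1}\frac{q^{j(j+1)/2}}{(-q;q)_j}.
\end{equation*}

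Next, I prove this series identity by introducing the formal power series $F(z)=\sum_{n\geq 0}(q;q)_n z^n$ and exploiting the recurrence $(q;q)_n=(1-q^n)(q;q)_{n-1}$ to derive the functional equation $(1-z)F(z)=1-qz\,F(qz)$. Specializing $z=-q^j$ gives $F(-q^j)=1/(1+q^j)+q^{j+1}/(1+q^j)\cdot F(-q^{j+1})$. Iterating this starting at $j=1$ peels off successive terms $q^{n(n+1)/2-1}/(-q;q)_n$ for $n=1,2,3,\dots$, producing $F(-q)=\sum_{n\geq 1}q^{n(n+1)/2-1}/(-q;q)_n$. Multiplying by $q$ and matching against the direct expansion $qF(-q)=\sum_{n\geq 0}(-1)^{n}q^{n+1}(q;q)_n=\sum_{k\geq 1}(-1)^{k+1}q^k(q;q)_{k-1}$ delivers the reduced identity.

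The one point requiring care is the formal-series bookkeeping in the iteration: the remainder at step $N$ is $q^{(N+1)(N+2)/2-1}F(-q^{N+1})/(-q;q)_N$, whose $q$-valuation must tend to infinity, which is immediate since the exponent grows quadratically in $N$ while $F(-q^{N+1})=1+O(q^{N+1})$. An alternative route would be to recognize the reduced identity as a specialization of one of the Heine or Jackson transformations mentioned in the keywords, but the functional-equation argument above is the most self-contained; notably, it makes no appeal to the combinatorial inequality in Theorem \ref{Ali_comb_THM}, which is instead reserved for the refinement of the present theorem flagged in the introduction.
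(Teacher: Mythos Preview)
Your proof is correct. Both you and the paper reduce to the same core $q$-series identity
\[
\sum_{k\geq 1}(-1)^{k+1}q^{k}(q;q)_{k-1}=\sum_{n\geq 1}\frac{q^{n(n+1)/2}}{(-q;q)_{n}},
\]
obtained exactly as you describe: interpret the left side via the smallest part, interpret the right side by writing $t(\pi)$ as a count of indices $j$ with $f_1,\dots,f_j$ odd, and multiply through by $(q;q)_\infty$.

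Where the two treatments diverge is in how this reduced identity is handled. The paper does not prove it; it recognizes the middle expression as the rank-parity generating function for distinct partitions, states the identity as equation~\eqref{rank_pf}, and cites it as a known result from Ramanujan's lost notebook (Andrews \cite{Andrews_quote_lost_V}, Andrews--Dyson--Hickerson \cite{AndrewsInvenciones}); it also notes that the original proof in \cite{BerkovichUncu4} went via Jackson's transformation. Your functional-equation argument for $F(z)=\sum_{n\geq 0}(q;q)_n z^n$ and the iterated substitution $z=-q^j$ is a genuinely different and more self-contained route: it avoids any appeal to the Heine/Jackson machinery and to external references, at the cost of not making visible the rank interpretation that the paper highlights. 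Your convergence remark about the remainder term is the right justification for the iteration in the formal power series ring.
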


This generating function identity can be articulated easily as a combinatorial correspondence as follows:

\begin{quote}\textit{The total count of partitions of a positive integer $N$, counted with the weight $1$ if the smallest part is odd, and $-1$ if the smallest part is even, is the same as the total of all odd-frequency chain lengths of partitions of $N$.}\end{quote}

One example of Theorem~\ref{Ordinary_Partitions_Combinatorial_Weighted_Theorem} is given in Table~\ref{Table_1}.

\begin{table}[htb]\caption{Example of Theorem~\ref{Ordinary_Partitions_Combinatorial_Weighted_Theorem} with $|\pi|=6$.}\label{Table_1}
\begin{center}\vspace{-.5cm}
\[\begin{array}{ccr|c}
&\pi\in\U  		&(-1)^{s(\pi)+1} & t(\pi)  \\
&(1^6) 			& 1\hspace*{.5cm}&	0	\\
&(1^4,2^1)		& 1\hspace*{.5cm}&	0		\\
&(1^3,3^1)		& 1\hspace*{.5cm}&	1		\\
&(1^2,2^2) 		& 1\hspace*{.5cm}&	0		\\
&(1^2,4^1) 		& 1\hspace*{.5cm}&	0		\\
&(1^1,2^1,3^1)	& 1\hspace*{.5cm}&	3		\\
&(1^1,5^1) 		& 1\hspace*{.5cm}&	1		\\
&(2^3) 			& -1\hspace*{.5cm}&	0		\\
&(2^1,4^1)		&-1\hspace*{.5cm}&	0	\\
&(3^2)			&1\hspace*{.5cm}&		0	\\
&(6^1)			&-1\hspace*{.5cm}&	0	\\ \hline
Total&			&5 \hspace*{.5cm}&	5	\\
\end{array}\]
\end{center}
\end{table}

We define \textit{non-negativity} of a series \[S=\sum_{n\geq 0} a_n q^n, \] if for all $n$, $a_n\geq 0$, where $q$ is a formal summation variable. We denote the non-negativity by the notation \[S \succcurlyeq 0.\] One important observation about the Theorem~\ref{Ordinary_Partitions_Combinatorial_Weighted_Theorem} is that the statistics $t(\pi)$ is non-negative for any partition $\pi$. It is clear that \[\sum_{\pi\in\U} t(\pi) q^{|\pi|} = \sum_{n\geq 1} p_t(n) q^n \succcurlyeq 0,\] where $p_t(n)$ is the total weighted count of partitions with the $t$ statistics. This implies that the series in \eqref{Ordinary_Partitions_Combinatorial_Weighted_Equation} are non-negative. Written in analytic form, the identity \eqref{Ordinary_Partitions_Combinatorial_Weighted_Equation} is equivalent to \begin{equation}\label{Ordinary_Partitions_Analytic_Identity}
\sum_{n\geq 1} \frac{q^n}{1+q^n}\frac{ 1}{(q;q)_{n-1}} = \sum_{n\geq 1} \frac{q^{n(n+1)/2}}{(q^2;q^2)_n (q^{n+1};q)_\infty},
\end{equation} where \[(a;q)_L := \prod_{i=0}^{L-1}(1-aq^i)\text{ and }(a;q)_\infty := \lim_{n\rightarrow\infty}(a;q)_L\text{ for }|q|<1.\] The left side of \eqref{Ordinary_Partitions_Analytic_Identity} is not manifestly non-negative due to the $1/(1+q^n)$ factors appearing in the summands, but the series on the right side (which is related to the $t(\pi)$ statistics) shows the non-negativity, as expected.

In this work, we introduce a refinement of Theorem~\ref{Ordinary_Partitions_Combinatorial_Weighted_Theorem} where we put a bound on the difference between the largest and the smallest parts of partitions. We prove that 
\begin{theorem}\label{Smallest_part_positivity_THM} For $L\geq 1$,
\begin{equation}\label{weighted_sum_of_bounded_difference}
\sum_{\substack{\pi\in\mathcal{U}\\l(\pi)-s(\pi)\leq L}} (-1)^{s(\pi)+1}q^{|\pi|} = \sum_{s\geq 1} \frac{(-1)^{s+1}q^s}{(q^s;q)_{L+1}}\succcurlyeq 0.
\end{equation}
\end{theorem}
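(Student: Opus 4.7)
My plan is to split Theorem~\ref{Smallest_part_positivity_THM} into the equality in \eqref{weighted_sum_of_bounded_difference} and the non-negativity assertion, and to reduce the latter to an inequality derivable from Theorem~\ref{Ali_comb_THM}.

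The equality is a direct generating-function computation. Splitting the left-hand side by the smallest part $s=s(\pi)$, the partitions with $s(\pi)=s$ and $l(\pi)-s\leq L$ are exactly those with $f_s\geq 1$ and all parts in $\{s,s+1,\dots,s+L\}$; their generating function is $\frac{q^s}{(q^s;q)_{L+1}}$. Weighting by $(-1)^{s+1}$ and summing over $s\geq 1$ reproduces the right-hand side, so this step is routine.

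For non-negativity, let $a_s(N)$ denote the number of partitions of $N$ with $s(\pi)=s$ and $l(\pi)-s\leq L$. A simple regrouping gives
\[
\sum_{s\geq 1}(-1)^{s+1}a_s(N)\;=\;\Bigl(a_1(N)-\sum_{s\geq 2}a_s(N)\Bigr)+2\sum_{k\geq 1}a_{2k+1}(N),
\]
in which the second summand is manifestly non-negative. Non-negativity of each coefficient of the series in \eqref{weighted_sum_of_bounded_difference} therefore reduces to the pointwise inequality $a_1(N)\geq \sum_{s\geq 2}a_s(N)$, which is precisely the statement highlighted in the abstract: among partitions of $N$ with $l-s\leq L$, those with smallest part $1$ outnumber those with smallest part exceeding $1$.

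The crux is thus to derive $a_1(N)\geq \sum_{s\geq 2}a_s(N)$ from Theorem~\ref{Ali_comb_THM}. I would aim to construct a norm-preserving injection $\Phi_N$ from $\bigsqcup_{s\geq 2}\{\pi:\,s(\pi)=s,\,l(\pi)-s\leq L,\,|\pi|=N\}$ into $\{\pi:\,s(\pi)=1,\,l(\pi)\leq L+1,\,|\pi|=N\}$. The naive candidate -- subtract $s-1$ from every part of $\pi$ and append $(s-1)\nu(\pi)$ copies of $1$ -- is norm-preserving and lands in the correct target, but collapses distinct values of $s$ onto the same image, so injectivity fails. The main obstacle is to exploit the extra structural room provided by Theorem~\ref{Ali_comb_THM}: its restriction $f_L=\delta_{L,1}$ on the $\mathcal{A}_{L,1}$-side identifies a coordinate in the image partitions that can be repurposed to encode the shift $s-1$, thereby restoring injectivity. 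Making this encoding explicit and verifying that it is globally well-defined across all $s\geq 2$ (with the $L=1$ case handled directly, since Theorem~\ref{Ali_comb_THM} is stated only for $L\geq 2$) is the main combinatorial step; once established, the reduction above delivers the non-negativity in Theorem~\ref{Smallest_part_positivity_THM}.
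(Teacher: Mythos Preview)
Your overall architecture matches the paper exactly: the equality is the routine split by smallest part, and the non-negativity is reduced, via the same regrouping
\[
\sum_{s\geq 1}(-1)^{s+1}a_s(N)=\Bigl(a_1(N)-\sum_{s\geq 2}a_s(N)\Bigr)+2\sum_{k\geq 1}a_{2k+1}(N),
\]
to showing $a_1(N)\geq\sum_{s\geq 2}a_s(N)$, which is precisely the paper's statement $G_{L,1}(q)\succcurlyeq 0$ (see \eqref{FL_def}, \eqref{abstract_positivity_of_alternating_smallest_part}).

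The gap is in the ``crux'' paragraph. You propose to extract $a_1(N)\geq\sum_{s\geq 2}a_s(N)$ from Theorem~\ref{Ali_comb_THM} by a single norm-preserving injection that records the shift $s-1$ in the forbidden coordinate $f_L$. But Theorem~\ref{Ali_comb_THM} compares $\mathcal{A}_{L,1}$ (smallest part $1$, parts $\leq L+1$, and $f_L=0$) with $\mathcal{A}_{L,2}$ (parts in $\{2,\dots,L+1\}$), whereas the inequality you need compares partitions with $s=1$, $l\leq L+1$ (no restriction on $f_L$) against partitions with $s\geq 2$ and only $l-s\leq L$ (largest part unbounded). These are genuinely different pairs of sets, and the paper shows that the relationship between the two inequalities is not a reindexing but a \emph{convolution}: one has
\[
G_{L,1}(q)=\frac{H_{L,1}(q)}{1-q^L}
\]
(Theorem~\ref{F_positivity_theorem}), proved analytically via \eqref{important_shift} and the $q$-binomial theorem \eqref{q_binomial}. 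Non-negativity of $G_{L,1}$ then follows because both $H_{L,1}$ and $1/(1-q^L)$ have non-negative coefficients (with $L=1$ handled directly, as you also anticipated). Combinatorially, the factor $1/(1-q^L)$ says that any injection realising $G_{L,1}\succcurlyeq 0$ out of the one for $H_{L,1}\succcurlyeq 0$ must absorb an \emph{arbitrary} multiplicity of an extra part of size $L$, not merely encode a single integer $s-1$; your sketch does not account for this, and as written the ``encoding'' step is left open. If you want a purely combinatorial completion, the cleanest route is to prove the identity $G_{L,1}=H_{L,1}/(1-q^L)$ bijectively (pair the freed-up $f_L$ on the $\mathcal{A}_{L,1}$ side with the summation over $s$ on the $G$ side) and then invoke Theorem~\ref{Ali_comb_THM}; otherwise, the paper's short $q$-series argument is the missing bridge.
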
 

We remark that for $L=0$, the right-hand side becomes \[\sum_{s\geq 1} \frac{(-1)^{s+1}q^s}{1-q^s}.\] Although Theorem~\ref{Smallest_part_positivity_THM} does not apply for this case. It is easy to conclude that \[\sum_{\substack{\pi\in\mathcal{U}\\l(\pi)=s(\pi)\\|\pi|=N}} (-1)^{s(\pi)+1}q^{|\pi|} \geq 0, \hspace{.5cm}\text{if}\hspace{.4cm} 4\nmid N.\]

Interested readers are invited to examine  \cite{AndrewsBeckRobbins}, \cite{Chapman}, and \cite{Kronholm} for other studies on bounded differences between largest and smallest parts.

Section~\ref{Section_Reper} has a short repertoire of basic hypergeometric identities that will be referred to later. In Section~\ref{Section_Ineq} we are going to prove two inequalities between sets of partitions (Theorem~\ref{Ali_comb_THM} and an analogue) using only injections between sets, and later state some related open questions. We will state the analytic versions of some of the theorems of Section~\ref{Section_Ineq} and their implications in Section~\ref{Section_non_negativity}. We later will use the complements of the range of the injective maps of Section~\ref{Section_Ineq} to get new $q$-series summation formulas. Theorem~\ref{Smallest_part_positivity_THM} will be proven in Section~\ref{Section_Proof_of_First_theorem}. Section~\ref{Section_Refinement} has an excursion in different representations and an observably non-negative expression for the analytic expression of \eqref{weighted_sum_of_bounded_difference} of Theorem~\ref{Smallest_part_positivity_THM}. An outlook section finishes the paper with a summary of open questions that arise from this study.

\section{Some $q$-Hypergeometric Identities}\label{Section_Reper}
Some $q$-hypergeometric functions and some of their related formulas that will be used later are stated here. Let $r$ and $s$ be non-negative integers and $a_1,a_2,\dots,a_r,b_1,b_2,\dots,b_s,q,$ and $z$ be variables. Then, \begin{equation}\label{r_Phi_s}_r\phi_s\left(\genfrac{}{}{0pt}{}{a_1,a_2,\dots,a_r}{b_1,b_2,\dots,b_s};q,z\right):=\sum_{n=0}^\infty \frac{(a_1;q)_n(a_2;q)_n\dots (a_r;q)_n}{(q;q)_n(b_1;q)_n\dots(b_s;q)_n}\left[(-1)^nq^{n\choose 2}\right]^{1-r+s}z^n.\end{equation}
Let $a$, $b$, $c$, $q$, and $z$ be variables. The $q$-binomial theorem \cite[II.3, p. 236]{GasperRahman} is \begin{equation}\label{q_binomial}
{}_1\phi_0 \left(\genfrac{}{}{0pt}{}{a}{-};q,z \right) = \frac{(az;q)_\infty}{(z;q)_\infty}.
\end{equation} All three Heine transformations \cite[III.1-3, p. 241]{GasperRahman}
\begin{align}\label{Heine1} are
{}_2\phi_1 \left(\genfrac{}{}{0pt}{}{a,\ b}{c};q,z \right) &= \frac{(b;q)_\infty(az;q)_\infty}{(c;q)_\infty(z;q)_\infty} {}_2\phi_1 \left(\genfrac{}{}{0pt}{}{c/b,\ z}{az};q,b \right),  \\ \label{Heine2} &= \frac{(c/b;q)_\infty (bz;q)_\infty}{(c;q)_\infty(z;q)_\infty} {}_2\phi_1 \left(\genfrac{}{}{0pt}{}{abz/c,\ b}{bz};q,c/b \right), \\ \label{Heine3} &= \frac{(abz/c;q)_\infty}{(z;q)_\infty}{}_2\phi_1 \left(\genfrac{}{}{0pt}{}{c/a,\ c/b}{c};q,abz/c \right) .
\end{align} The Jackson transformation \cite[III.4, p. 236]{GasperRahman} is \begin{equation}
\label{Jackson_transformation}{}_2\phi_1 \left(\genfrac{}{}{0pt}{}{a,\ b}{c};q,z \right)  = \frac{(az;q)_\infty}{(z;q)_\infty}{}_2\phi_2 \left(\genfrac{}{}{0pt}{}{a,\ c/b}{c,\ az};q,bz \right).\end{equation}

\section{Two New Partition Inequalities}\label{Section_Ineq}

We start our discussion with a proof of Theorem~\ref{Ali_comb_THM}. Recall:\\

\noindent\textbf{Theorem~\ref{Ali_comb_THM}.} For any $L\geq 2$ and $N\geq 1 $,
\begin{equation*}
|\{\pi:\pi\in\mathcal{A}_{L,1},\ |\pi|=N\}|\geq|\{\pi:\pi\in\mathcal{A}_{L,2},\ |\pi|=N\}|.
\end{equation*}

Note that the claimed inequality of Theorem~\ref{Ali_comb_THM} is not true for $L=1$ as the set $A_{1,1}$ only has partitions of type $(1^1,2^{f_2})$ and $A_{1,2}$ only has partitions of type $(2^{1+f_2})$. Hence, $\mathcal{A}_{1,1}$ has a single partition for every odd norm and $\mathcal{A}_{1,2}$ has a single partition for every even norm and nothing else, making the inequality claim of \eqref{Ali_comb_inequality} impossible for this case.

For $L\geq 2$, we prove the inequality \eqref{Ali_comb_inequality} in an injective manner. 

\begin{proof}
First we handle the $L=2$ case with the injection $\gamma^*: \mathcal{A}_{2,2}\rightarrow \mathcal{A}_{2,1}$. Let $\pi=(2^{f_2},3^{f_3})\in \mathcal{A}_{2,2}$, then \begin{enumerate}[i.]
\item if  $f_2>0$, then $\gamma^*(\pi) = (1^{2f_2},2^0,3^{f_3})$,\\[-1.5ex]
\item if $f_2=0$ and $f_3>0$, then $\gamma^*(\pi) = (1^{3},2^0,3^{f_3-1})$.
\end{enumerate}
The parity of the frequency of 1 in the image clearly determines the case. Hence, $\gamma^*$ is an injection demonstrating \eqref{Ali_comb_inequality} for $L=2$.

For $L\geq 3$, let $\pi = (1^0,\ 2^{f_2},\ \dots,\ L^{f_L},\ (L+1)^{f_{L+1}})$ be a partition from the set $\mathcal{A}_{L,2}$. Define $\gamma~:~\mathcal{A}_{L,2}~\rightarrow~\mathcal{A}_{L,1}$ by the following cases. 

\begin{enumerate}[i.]
\item If $2< s(\pi) <L+1$, then $\gamma(\pi) = (1^{[(f_L-\delta_{L,s(\pi)})\cdot L + 1]},(s(\pi)-1)^{1},s(\pi)^{f_{s(\pi)}-1},\dots, L^{0},(L+1)^{f_{L+1}})$,\\[-1.5ex]

\item if $s(\pi) = L+1$, then $\gamma(\pi) = (1^{L+1},(L+1)^{f_{L+1}-1})$,\\[-1.5ex]

\item if $s(\pi) = 2$, then $\gamma(\pi) = (1^{(f_L\cdot L + 2)},2^{f_2-1},\dots, L^{0},(L+1)^{f_{L+1}})$.
\end{enumerate}

The image of a partition $\pi \in \mathcal{A}_{L,2}$ is uniquely defined. The remainder of the frequency $f_1$ divided by $L$ in the image is either 1 or 2. The remainder 2 comes from a unique case. In the remainder being 1 cases, one can uniquely identify the pre-image by looking at the smallest part size that is greater than 1. This proves that $\gamma$ is an injection and it is enough to show \eqref{Ali_comb_inequality}.
\end{proof}

Interested reader is invited to examine \cite{AntiTelescoping}, \cite{BerkovichGarvan_dissecting}, \cite{BerkovichGrizzell1}, \cite{BerkovichGrizzell2}, \cite{BerkovichGrizzell3}, and \cite{McLaughlin} for other examples of injective combinatorial arguments and inequalities between the sizes of sets of partitions. 

We exemplify Theorem~\ref{Ali_comb_THM} with Table~\ref{Table_Ali_comb} by writing out the related partitions.

\begin{table}[h]\caption{Example of Theorem~\ref{Ali_comb_THM} with $L=3$ and $N=12$, where the images of the map $\gamma$ are also indicated.}\label{Table_Ali_comb}
\begin{tabular}{ccc}
$\begin{array}{c}
\pi\in\mathcal{A}_{3,2}\\[-1.8ex] \\\\[-1.8ex]\\ (3^4)\\[-1.8ex]\\(2,3^2,4)\\[-1.8ex] \\(2^3,3^2)\\[-1.8ex]\\\\[-1.8ex] \\\\[-1.8ex] \\ (4^3)\\[-1.8ex]\\\\[-1.8ex]\\\\[-1.8ex] \\ (2^2,4^2)\\[-1.8ex]\\ (2^4,4) \\[-1.8ex]\\(2^6)\\[-1.8ex]
\end{array}
$
&
$\begin{array}{c}
\gamma \\[-3ex]\\\\[-1.8ex]\\\\[-1.8ex] \rightarrow\\[-1.8ex]\\\rightarrow \\[-1.8ex]\\\rightarrow\\[-1.8ex]\\\\[-1.8ex] \\ \\[-1.8ex]\\ \rightarrow\\[-1.8ex]\\\\[-1.8ex]\\\\[-1.8ex]\\ \rightarrow\\[-1.8ex]\\ \rightarrow\\[-1.8ex] \\\rightarrow\\[-1.8ex]
\end{array}$
 &
$\begin{array}{c}
\pi\in\mathcal{A}_{3,1}\\[-1.8ex] \\(1^{12})\\[-1.8ex]\\ (1^{10},2)\\[-1.8ex]\\ (1^8,4)\\[-1.8ex]\\(1^8,2^2)\\[-1.8ex]\\ (1^6,2^3)\\[-1.8ex]\\ (1^6,2,4)\\[-1.8ex]\\ (1^4,4^2)\\[-1.8ex]\\(1^4,2^2, 4)\\[-1.8ex]\\ (1^4, 2^4)\\[-1.8ex]\\ (1^2,2,4^2)\\[-1.8ex]\\ (1^2,2^3,4)\\[-1.8ex] \\(1^2,2^5)\\[-1.8ex]
\end{array}$
\end{tabular}
\end{table}

We can shift the permissible parts of the sets $\mathcal{A}_{L,i}$ up by one and also get a similar result to Theorem~\ref{Ali_comb_THM}. Let $L\geq 1$ be an integer and define \begin{enumerate}[i.]
\item $\mathcal{B}_{L,1}$ be the set of partitions with the smallest part is 2, all the parts are $\leq L+2$ and $f_{L+1} = \delta_{L,1}$,\\[-1.5ex]
\item$\mathcal{B}_{L,2}$ be the set of non-empty partitions where the parts are in the domain $\{3,4,\dots,L+2 \}$.
\end{enumerate}Then we have 

\begin{theorem}\label{Ali_comb_THM2} For any $L\geq 3$ and $N\geq 1 $
\begin{equation}\label{Ali_comb_inequality2}|\{\pi:\pi\in\mathcal{B}_{L,1},\ |\pi|=N\}| + \delta_{N,3} + \delta_{N,9}\delta_{L,4}\ \geq\ |\{\pi:\pi\in\mathcal{B}_{L,2},\ |\pi|=N\}|.\end{equation}
\end{theorem}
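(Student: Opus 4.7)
The plan is to mirror the proof of Theorem~\ref{Ali_comb_THM} by constructing an injection $\gamma'\colon\mathcal{B}_{L,2}\setminus\mathcal{E}\to\mathcal{B}_{L,1}$, where $\mathcal{E}$ is a small exceptional set chosen to match the correction terms $\delta_{N,3}+\delta_{N,9}\delta_{L,4}$. Where the earlier map $\gamma$ used copies of the part $1$ as a \emph{ballast} to absorb the removed copies of the forbidden part $L$ and to compensate for the $-1$ norm shift produced by lowering one copy of $s(\pi)$ to $s(\pi)-1$, the present setting forces the ballast to be $2$. Since copies of $2$ build only even totals while the forbidden part $L+1$ can contribute an odd amount, I anticipate mixing the ballast $2$'s with a single extra $3$ in each parity-offending case.

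The case split on $s(\pi)$ would parallel the three cases in the proof of Theorem~\ref{Ali_comb_THM}. For $3<s(\pi)<L+2$, I would lower one copy of $s(\pi)$ (by $1$ or by $2$ as parity dictates), strip all $f_{L+1}$ copies of the forbidden $L+1$, and redistribute the resulting surplus norm as copies of $2$ with at most one extra $3$ adjoined when the surplus is odd; the cases $s(\pi)=L+2$ and $s(\pi)=3$ would be treated by analogous but boundary-adjusted recipes, using swaps such as $3+3=2+2+2$ in the latter. As in Theorem~\ref{Ali_comb_THM}, the case producing a given image should be recoverable by reading off the residue of the frequency of $2$'s modulo $2$ together with the smallest part of the image strictly above $2$, which gives injectivity.

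The main obstacle will be pinning down the exceptional set $\mathcal{E}$. The partition $(3)$ has no image because $\mathcal{B}_{L,1}$ contains no partition of norm $3$: any such partition would need a part $2$, leaving norm $1$ for the rest, but all other parts are $\geq 2$. This produces the universal correction $\delta_{N,3}$. The sporadic correction $\delta_{N,9}\delta_{L,4}$ is forced by pigeonhole: for $L=4$ one has $\mathcal{B}_{4,2}\cap\{|\pi|=9\}=\{(3^3),(3,6),(4,5)\}$ of size three while $\mathcal{B}_{4,1}\cap\{|\pi|=9\}=\{(2^3,3),(2,3,4)\}$ has size two, so exactly one source partition is unavoidably excluded. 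The final delicate step is to show that no further sporadic exceptions appear for larger $L$; I expect this reduces to a finite check at small $N$, since for $L\geq 5$ both the ballast $2$ and the corrector $3$ have enough room inside $\{2,3,\dots,L,L+2\}$ to resolve any parity mismatch arising from the $L+1$ stratum.
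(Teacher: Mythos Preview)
Your overall plan---build an injection from $\mathcal{B}_{L,2}$ (minus the exceptional partitions) into $\mathcal{B}_{L,1}$ using copies of $2$ as ballast---is the paper's strategy, and you have correctly diagnosed why the corrections $\delta_{N,3}$ and $\delta_{N,9}\delta_{L,4}$ are forced. But two real gaps remain.

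First, the inversion signature you propose, ``the residue of $f_2$ modulo $2$ together with the smallest part above $2$'', is too coarse to guarantee injectivity. In Theorem~\ref{Ali_comb_THM} the inverse works because $f_1\bmod L$ takes only the values $1$ or $2$, and the lowered part $s(\pi)-1$ always sits strictly between $1$ and $L$, so it is visible as the least part $>1$. Here, lowering $s(\pi)$ by $2$ sends $s(\pi)=4$ to $2$ (merging with the ballast) and lowering by $1$ sends $s(\pi)=4$ to $3$ (merging with your parity corrector); either way the marker disappears for small $s(\pi)$. Worse, a ``single extra $3$'' adjoined for parity is indistinguishable from a $3$ produced by lowering $s(\pi)\in\{4,5\}$ or from an original $3$ in the $s(\pi)=3$ branch. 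You need a signature that records enough of $f_2$ (not merely its parity) to recover $f_{L+1}$, and a case scheme that keeps the marker part away from $2$, $3$, and $L+1$ simultaneously.

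Second, the hope that ``no further sporadic exceptions'' reduces to a finite check is unjustified: the claim ranges over all $L\ge 3$ and all $N\ge 1$, so there is no finite set to inspect. The absence of further exceptions must be a consequence of the injection being everywhere defined and injective, not of a check.

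The paper's construction departs from your outline in a way worth noting: it splits first on the \emph{parity of $L$}, not on $s(\pi)$. For odd $L=2m-1$ the forbidden part $L+1=2m$ is even, and the map $\Gamma_1$ keys on the largest even part present, with the exact value of $f_2$ in the image encoding that index. For even $L=2m$ the forbidden part $2m+1$ is odd, and the map $\Gamma_2$ keys on the parity of $f_{2m+1}$, with $f_2$ in the image read modulo $2m+1$. In both regimes the case list runs to five--seven cases; the three-case $s(\pi)$ trichotomy of Theorem~\ref{Ali_comb_THM} does not survive the passage from ballast $1$ to ballast $2$.
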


Before the proof of Theorem~\ref{Ali_comb_THM2}, we examine the excluded initial cases of $L$. In the case $L=1$, $\mathcal{B}_{1,1}$ is the set of partitions of type $(2^1,3^{f_3})$. Hence, all partitions of $\mathcal{B}_{1,1}$ have norm 2 modulo 3. The set $\mathcal{B}_{1,2}$ contains partitions only of the type $(3^{1+f_3})$, which 0 modulo 3 norm. Therefore, the inequality \eqref{Ali_comb_inequality2} cannot hold for all $N$. The sets $\mathcal{B}_{2,1}$ and $\mathcal{B}_{2,2}$ contain partitions exclusively of the type $(2^{1+f_2},4^{f_4})$ and $(3^{f_3},4^{f_4})$ with $f_3+f_4>0$, respectively. It is easy to see that all the partitions in $\mathcal{B}_{2,1}$ have even norms, but for any $k\geq 0$, there are partitions of norm $4k+3$ in $\mathcal{B}_{2,2}$. Therefore, for $L=2$, the inequality \eqref{Ali_comb_inequality2} does not hold for all $N$ either.

\begin{proof}
We begin our proof with the $L=3$ case. Let $\pi = (3^{f_3},4^{f_4},5^{f_5})$ be a partition in $\mathcal{B}_{3,2}$, with norm $> 3$. Let $\Gamma_1^*$ be the map from $\mathcal{B}_{L,2}$ to $\mathcal{B}_{L,1}$ as follows:
\begin{enumerate}[i.]
\item If $f_4 >0$, then $\pi\mapsto (2^{2f_4}, 3^{f_3},5^{f_5})$, \\[-1.5ex]
\item if $f_3=f_4=0$, then $f_5>0$ and define $\pi\mapsto (2^1,3^1,5^{f_5-1})$,\\[-1.5ex]
\item if $f_4=0$ and $f_3>1$, then $\pi\mapsto (2^3,3^{f_3 -2},5^{f_5})$,\\[-1.5ex]
\item if $f_4=0$ and $f_3=1$, since $|\pi|>3$, $f_5>0$, then $\pi\mapsto (2^1,3^{2},5^{f_5-1})$.
\end{enumerate}
This case by case map $\Gamma_1^*$ can easily be seen to be an injection. In cases i. and iii. the frequency of 2 as a part in the image, is the signature, and in the other cases frequency of 3 becomes our signature. This distinguishes all the cases from each other. 

Let $\pi = (3^{f_3},\dots,(L+2)^{f_{L+2}})$ be a partition in $\mathcal{B}_{L,2}$, with norm $> 3$. For $L=2m-1>5$, we define the injective map $\Gamma_1$ as follows:
\begin{enumerate}[i.]
\item If $f_{L+1}=f_{2m}>0$, then $\pi\mapsto (2^{f_{2m}\cdot m},3^{f_3},\dots, (L+1)^0, (L+2)^{f_{L+2}})$,\\[-1.5ex]
\item if $\exists i\in\{2,\dots, m-1\}$ such that $f_{2i}>0$ and $f_{2j}=0$, $\forall j> i$, then \[\pi\mapsto (2^{i},3^{f_3},\dots,(2i)^{f_{2i}-1},(2i+1)^{f_{2i+1}},(2i+2)^0,\dots,(L+1)^0, (L+2)^{f_{L+2}}),\]
\item if $\forall i \in\{2,\dots, m\}$, $f_{2i}=0$ and $s(\pi)$ is odd $> 3$, then \[\pi\mapsto (2^{1},(s(\pi)-2)^1,(s(\pi)-1)^0,s(\pi)^{f_{s(\pi)}-1},\dots),\]
\item if $\forall i \in\{2,\dots, m\}$, $f_{2i}=0$ and $f_3\geq2$, then $\pi\mapsto (2^{1},3^{f_3-2},4^1,5^{f_5},\dots)$,\\[-1.5ex]
\item if $\forall i \in\{2,\dots, m\}$, $f_{2i}=0$ and $f_3=1$, then since $|\pi|>3$ there is a smallest positive $j>1$ such that $f_{2j+1}>0$, then $\pi\mapsto (2^{1},(j+1)^2,(2j+1)^{f_{2j+1}-1},\dots)$.
\end{enumerate}

The map $\Gamma_1$ for odd $L \geq 5$ is injective as the number of occurrences of $2$, if larger than $1$, specifies the case and if $2$ appears only once in the image then the following smallest parts specify the case.

One can also view $\Gamma_1^*$ for $L=3$ as a derivation of $\Gamma_1$. We use the cases i., iii. and v. of $\Gamma_1$ as is and modify the iv. as $\pi \mapsto (2^3,3^{f_3-2},5^{f_5})$. The case ii. of $\Gamma_1$ does not apply for $L=3$.

Now we define the map $\Gamma_2^*$ for $L=4$. Let $\pi = (3^{f_3},4^{f_4},5^{f_5},6^{f_6})$ with $|\pi|\not=3$.
Then the map $\Gamma_2^*$ sends $\pi$ to the following images depending on the following cases.
\begin{enumerate}[i.]
\item If $f_5$ is positive even, then $\pi\mapsto (2^{(f_5/2)5},3^{f_3},4^{f_4},5^{0},6^{f_6})$,\\[-1.5ex]
\item if $f_5$ is positive odd and if $f_3>0$, then $\pi\mapsto (2^{((f_5-1)/2)5+4},3^{f_3-1},4^{f_4},5^{0},6^{f_6})$,\\[-1.5ex]
\item if $f_5$ is positive odd and if $f_3=0$, then $\pi\mapsto (2^{((f_5-1)/2)5+1},3^{1},4^{f_4},5^{0},6^{f_6}),$\\[-1.5ex]
\item if $f_5=0$,\\[-1.5ex] \begin{enumerate}[1.] \item and $f_6>0$, then $\pi\mapsto (2^{3},3^{1},4^{f_4},5^{0},6^{f_6-1})$,\\[-1.5ex]
\item or $f_6=0$ and $f_4>0$, then $\pi\mapsto (2^{2},3^{1},4^{f_4-1},5^{0},6^{0})$,\\[-1.5ex]
\end{enumerate} 
\item if $f_4=f_5=f_6=0$, since $|\pi|\not= 3$,\\[-1.5ex] \begin{enumerate}[1.] \item either $f_3=2$, then $\pi\mapsto (2^{1},3^{0},4^{1},5^{0},6^{0})$,\\[-1.5ex]
\item or $f_3 \geq 3$, then $\pi\mapsto (2^{1},3^{f_3-2},4^{1},5^{0},6^{0})$.\\[-1.5ex]
\end{enumerate} 
\end{enumerate}
The partitions $(4,5)$ and $(3^3)$ in $\mathcal{B}_{4,2}$ both get mapped to $(2,4,5)$, which is a source of the extra correction term of size 1, for $N=9$. Other than this explained issue, all the images of $\Gamma_2^*$ can easily be classified and the inverse images can be found by looking at the frequency of 2 modulo 5. If the frequency of $2$ is exactly one, then the frequency of 3 determines the case and sub-case the image is coming from. This injective map can be generalized for larger even $L$. We define $\Gamma_2$ for all even $L= 2m \geq 6$. Let $\pi = (3^{f_3},\dots,(L+2)^{f_{L+2}})$ be a partition in $\mathcal{B}_{L,2}$, with norm $> 3$.
\begin{enumerate}[i.]
\item If $f_{L+1}=f_{2m+1}$ is positive even, then $\pi\mapsto (2^{(f_{2m+1}/2)(2m+1)},3^{f_3},\dots, (L+1)^0, (L+2)^{f_{L+2}})$,\\[-1.5ex]
\item if $f_{2m+1}$ is positive odd and if $\exists k \in\{2,\dots,m\}$ with $f_{2k-1}>0$ where $\forall k<j<m+1$, $f_{2j-1}=0$, then \[\pi\mapsto (2^{[(f_{2m+1}-1)/2](2m+1) + (m+k)},\dots,(2k-1)^{f_{2k-1}-1},\dots,(L+1)^0, (L+2)^{f_{L+2}}),\]
\item if $f_{2m+1}$ is positive odd and $\forall k \in\{2,\dots,m\}$, $f_{2k-1}=0$, then \[\pi\mapsto (2^{[(f_{2m+1}-1)/2](2m+1) + 1}, \dots,(2m-1)^{1},(2m)^{f_{2m}},(L+1)^0, (L+2)^{f_{L+2}}),\]
\item if $f_{2m+1}=0$, and there exist largest $k \in\{2,\dots,m +1\}$ such that $f_{2k} >0$, then \[\pi\mapsto (2^{k},3^{f_3}, \dots,(2k)^{f_{2k}-1},\dots),\]
\item if $f_{2m+1}=0$, and $\forall k \in\{2,\dots,m +1\}$, $f_{2k} = 0$ and if $f_3=1$, then since $|\pi|>3$ there exists the smallest positive $i>1$ such that $f_{2i+1}>0$, then $\pi\mapsto (2^{1},(i+1)^2,(2i+1)^{f_{2i+1}-1},\dots)$,\\[-1.5ex]
\item if $f_{2m+1}=0$, and $\forall k \in\{2,\dots,m +1\}$ such that $f_{2k} = 0$ and if $f_3>1$, then $\pi\mapsto (2^{1},3^{f_3-2},4^1,\dots)$,\\[-1.5ex]
\item if $f_{2m+1}=0$, and $\forall k \in\{2,\dots,m +1\}$ such that $f_{2k} = 0$ and if $f_{3}=0$ then there exists smallest integer $m>i>1$ such that $f_{2i+1}>0$, then $\pi\mapsto (2^{1},(2i-1)^1,(2i+1)^{f_{2i+1}-1},\dots)$,
\end{enumerate}

The $\Gamma_2$ injection, just like $\Gamma_1$, has no problem in separating the cases when $f_2 \not = 1$ in the image of partitions. The $f_2=1$ cases in the image can be identified uniquely by the second and third smallest parts and their frequencies. The condition $L \geq 6$ or equivalently $m\geq 3$ is used implicitly as it is necessary for the vii. case to be defined.

For the $L=4$ case our injection $\Gamma_2^*$ for partitions $|\pi|$ with norm not equal to $3$ or $9$ can be related with $\Gamma_2$, where we use the cases i.--iv. and vi. with $m=2$, where the case vi. comes with the extra assertion that $f_3-2 \not = 1$. 
\end{proof}

An example of Theorem~\ref{Ali_comb_THM2} is given in Table~\ref{Table_Ali_comb2}.

\begin{table}[ht]\caption{Example of Theorem~\ref{Ali_comb_THM2} with $L=5$ and $N=12$, where the images of the map $\Gamma_1$ are also indicated.}\label{Table_Ali_comb2}
\begin{tabular}{ccc}
$\begin{array}{c}
\pi\in\mathcal{B}_{5,2} \\ (6^2) \\(3^2,6) \\ (3^4) \\(3,4,5) \\(5,7) \\  \\ (4^3)\\ {\color{white}a}
\end{array}
$
&
$\begin{array}{c}
\Gamma_1 \\ \rightarrow \\ \rightarrow \\ \rightarrow  \\ \rightarrow \\ \rightarrow \\\\ \rightarrow\\ {\color{white}a}
\end{array}$
 &
$\begin{array}{c}
\pi\in\mathcal{B}_{5,1} \\(2^6) \\ (2^3,3^2) \\  (2,3^2,4)\\ (2^2,3,5) \\ (2,3,7) \\ (2^4,4)  \\ (2^2,4^2) \\ (2,5^2)
\end{array}$
\end{tabular}
\end{table}

Theorem~\ref{Ali_comb_THM} and \ref{Ali_comb_THM2} are intriguing and can also be viewed as the initial stages of a more general conjecture. Define the following sets \begin{enumerate}[i.]
\item $\mathcal{C}_{L,s,1}$ denotes the set of partitions where the smallest part is $s$, all the parts are $\leq L+s$ and $L+s-1$ doesn't appear as a part,\\[-1.5ex]
\item$\mathcal{C}_{L,s,2}$ denotes the set of non-empty partitions where the parts are in the domain $\{s+1,\dots,L+s \}$.
\end{enumerate}

\begin{conjecture}\label{Sets_Conjecture} For given integers $L$ and $s$ there exist $M$, which depends on $s$ only, such that
\begin{equation}\label{Ali_comb_conjecture3}|\{\pi:\pi\in\mathcal{C}_{L,s,1},\ |\pi|=N\}| \ \geq\ |\{\pi:\pi\in\mathcal{C}_{L,s,2},\ |\pi|=N\}|,\end{equation} for all $N\geq M$.
\end{conjecture}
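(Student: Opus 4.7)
The approach I would take is to construct an explicit injection $\Gamma_s \colon \mathcal{C}_{L,s,2} \to \mathcal{C}_{L,s,1}$ for all $L$ above some threshold $L_{0}(s)$, handling the finitely many small $L$ separately by direct computation (as is done for $L=2$ with $\gamma$, and for $L=3,4$ with $\Gamma_{1}^{*},\Gamma_{2}^{*}$). The central identity driving the map is
\[
  L + s - 1 \;=\; s \,+\, (L - 1),
\]
which decomposes the forbidden part $L+s-1$ into two parts drawn from the target alphabet $\{s,\,s+1,\,\dots,\,L+s-2,\,L+s\}$, valid as soon as $L-1\geq s+1$ and $L-1\neq L+s-1$.

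The main case of $\Gamma_{s}$ would apply to partitions $\pi\in\mathcal{C}_{L,s,2}$ with $f_{L+s-1}(\pi)\geq 1$: replace every copy of $L+s-1$ in $\pi$ by one $s$ together with one $L-1$, producing an image with $f_{s}=f_{L+s-1}(\pi)$ and $f_{L-1}$ increased accordingly. The complementary case $f_{L+s-1}(\pi)=0$ would be handled by splitting a suitably chosen part $j$ of $\pi$ as $j=s+(j-s)$: the smallest part when $s(\pi)\geq 2s+1$, or else a largest part $L+s$ split as $s+L$. Invertibility would come from a residue signature: the value of $f_{s}$ in the image, together with the profile of its next few smallest parts, encodes which rule was applied, in direct analogy with the mod-$L$ signature on $f_{1}$ in $\gamma$ and the parity/mod-$(2m+1)$ signatures in $\Gamma_{1},\Gamma_{2}$.

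The principal obstacle will be the intermediate range $s+1\leq s(\pi)\leq 2s$, for which no single-part split is legal inside the target alphabet; these partitions require subcases that first coalesce several small parts into a larger one before splitting, reminiscent of the corrections $\delta_{N,3}$ and $\delta_{N,9}\delta_{L,4}$ already visible for $s=2$ in Theorem~\ref{Ali_comb_THM2}. Bounding the number of such exceptions uniformly in $L$ is exactly what forces $M$ to depend only on $s$: once $|\pi|$ exceeds some explicit function of $s$ alone, either $f_{L+s-1}(\pi)\geq 1$ automatically or $s(\pi)\geq 2s+1$, so that one of the main cases of $\Gamma_{s}$ applies. I also expect that parity and divisibility features of the decomposition $L+s-1=s+(L-1)$ will split the argument into several parallel injections indexed by $L\bmod s$, each valid on its own residue class, rather than a single uniform map.
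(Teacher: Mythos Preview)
The statement you are attempting to prove is presented in the paper as an open \emph{conjecture} (Conjecture~\ref{Sets_Conjecture}); the paper establishes only the special cases $s=1$ (Theorem~\ref{Ali_comb_THM}) and $s=2$ (Theorem~\ref{Ali_comb_THM2}), with respective thresholds $M=1$ and $M=10$, and explicitly leaves the general $s$ open. There is therefore no proof in the paper against which to compare your proposal.

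As for the proposal itself, there is a genuine gap. Your key claim that ``once $|\pi|$ exceeds some explicit function of $s$ alone, either $f_{L+s-1}(\pi)\geq 1$ automatically or $s(\pi)\geq 2s+1$'' is false. For any $L\geq 3$ the partition $\pi=((s+1)^{k})\in\mathcal{C}_{L,s,2}$ has $|\pi|=k(s+1)$ arbitrarily large, yet $f_{L+s-1}(\pi)=0$ and $s(\pi)=s+1\leq 2s<2s+1$; moreover $f_{L+s}(\pi)=0$, so neither your ``split the smallest part'' rule nor your ``split a copy of $L+s$'' rule applies. Thus the intermediate range $s+1\leq s(\pi)\leq 2s$ does not contribute only finitely many exceptional norms---it persists for all $N$ and for all $L$, and your sketch supplies no mechanism to handle it. The uniformity of $M$ in $s$ alone, which is exactly the content of the conjecture, does not follow.

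Your decomposition $L+s-1=s+(L-1)$ and the residue-signature bookkeeping are natural extensions of the $s=1,2$ injections in the paper, but promoting them to a proof for general $s$ appears to require a genuinely new idea for the stubborn small-smallest-part regime---which is presumably why the authors state the result as a conjecture rather than a theorem.
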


The first two initial families of cases for $s=1$ and $2$ are Theorem~\ref{Ali_comb_THM} and \ref{Ali_comb_THM2} with $M=1$ and $M=10$. It should be noted that in a case when $L$ tends to $\infty$, this conjecture is nothing but a tautology. 

In the definition of $\mathcal{B}_{L,i}$, we shifted the permissible part sizes of $\mathcal{A}_{L,i}$ up by one. Another route to take would be shifting the sets, but keeping the impermissible part $L$ of $\mathcal{A}_{L,i}$ the same. For $L\geq s+1$, let
\begin{enumerate}[i.]
\item $\mathcal{C}^*_{L,s,1}$ be the set of partitions where the smallest part is $s$, all the parts are $\leq L+s$ and $L$ doesn't appear as a part,\\[-1.5ex]
\end{enumerate}

Similar to Conjecture~\ref{Sets_Conjecture} we also claim that

\begin{conjecture}\label{Sets_Conjecture2} For given integers $L$ and $s$ there exist $M$, which only depends on $s$, such that
\begin{equation}\label{Ali_comb_conjecture4}|\{\pi:\pi\in\mathcal{C}^*_{L,s,1},\ |\pi|=N\}| \ \geq\ |\{\pi:\pi\in\mathcal{C}_{L,s,2},\ |\pi|=N\}|,\end{equation} for all $N\geq M$.
\end{conjecture}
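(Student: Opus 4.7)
The plan is to adapt the injective-mapping strategy of Theorems~\ref{Ali_comb_THM} and~\ref{Ali_comb_THM2}: construct a map $\gamma^\dagger\colon\mathcal{C}_{L,s,2}\to\mathcal{C}^*_{L,s,1}$ that is injective on all partitions of norm at least $M(s)$, with $M$ depending only on $s$. The new structural feature compared to Theorem~\ref{Ali_comb_THM2} is that the forbidden part $L$ now lies in the \emph{interior} of the allowed codomain range $\{s,s+1,\dots,L+s\}\setminus\{L\}$ rather than at its boundary, which changes both the bookkeeping and the set of exceptional norms.

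Given $\pi=((s+1)^{f_{s+1}},\dots,(L+s)^{f_{L+s}})\in\mathcal{C}_{L,s,2}$, I would proceed in two stages. First, rewrite every copy of $L$ appearing in $\pi$ as a sum of codomain-legal parts; the simplest rewriting is $L=s+(L-s)$, legal whenever $L\geq 2s$, with the sub-regime $s+1\leq L<2s$ handled separately by grouping copies or by splitting differently. Second, if the resulting partition still has smallest part strictly greater than $s$, peel off a single copy of the largest part present and split it as one copy of $s$ plus codomain-legal pieces, the exact split depending on the top part modulo $s$. As in Theorems~\ref{Ali_comb_THM}--\ref{Ali_comb_THM2}, the frequency $f_s$ of the image, together with its residue modulo a suitable period depending on $s$, serves as the invertibility signature: the inversion reads off $f_s$ first, identifies the case used, and reverses the rewriting.

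The case decomposition I anticipate mirrors those of $\gamma^*$, $\Gamma_1$, and $\Gamma_2$ in Theorems~\ref{Ali_comb_THM}--\ref{Ali_comb_THM2}, with an additional dichotomy according to whether $f_L>0$ in $\pi$ and whether $s\mid L$. The latter distinction controls how cleanly a copy of $L$ or $L+s$ splits into copies of $s$: if $s\mid L$ one obtains $L/s$ copies of $s$ directly, while if $s\nmid L$ one must combine several parts until a total divisible by $s$ appears and then place the residue into $\{s+1,\dots,L+s\}\setminus\{L\}$.

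The chief obstacle, and the reason this remains conjectural, is proving the uniformity of $M$ in $L$. For small partitions in $\mathcal{C}_{L,s,2}$---say $\pi=(p)$ for a single $p\in\{s+1,\dots,L+s\}$, or $\pi=((s+1)^k)$ with $k$ small---the construction can fail or collide, and one must show that the offending norms all lie below a bound depending on $s$ alone. I expect the bulk of the proof effort to go into verifying that every a~priori obstruction of the form ``the forced residue equals $L$'' admits a local repair (for instance, swapping a forbidden residue of $L$ for two legal parts of sizes $L-1$ and $s+1$, provided these are legal in the codomain), so that the only irreducible exceptions occur in a finite list whose norms depend only on $s$.
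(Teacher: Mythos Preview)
The statement you are addressing is labelled a \emph{Conjecture} in the paper, and the paper offers no proof of it; it is explicitly left open and revisited only in the Outlook section as motivation for the analytic Conjecture~\ref{H_conj}. There is therefore no ``paper's own proof'' to compare your proposal against.

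Your proposal is not a proof either, and you are candid about this: the passage beginning ``The chief obstacle, and the reason this remains conjectural'' identifies exactly the missing ingredient, namely the uniformity of $M$ in $L$. The two-stage scheme you sketch (eliminate all copies of $L$ by rewriting $L=s+(L-s)$, then force the smallest part down to $s$ by splitting a largest part) is a reasonable heuristic in the spirit of the injections $\gamma$, $\Gamma_1$, $\Gamma_2$ of Theorems~\ref{Ali_comb_THM} and~\ref{Ali_comb_THM2}, but several of the steps are not well-defined as stated. For instance, in the first stage the rewriting $L\mapsto s+(L-s)$ may send $L-s$ to a value already equal to $s$ (when $L=2s$) or to a value that, after the second stage, collides with the signature you intend to read from $f_s$; and in the sub-regime $s+1\le L<2s$ you gesture at ``grouping copies or splitting differently'' without specifying a rule, which is precisely where the $L$-dependence of the exceptional set threatens to creep in. The paper's own experience with $\Gamma_2^*$ for $L=4$ (the stray collision at $N=9$) shows that such anomalies are genuine and not merely artifacts of a careless map.

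In short: your outline is consistent with the paper's methods and is a sensible plan of attack, but it does not advance beyond what the paper already knows, and the conjecture remains open.
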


In Section~\ref{Section_Outlook} we will be reiterating these conjectures and state their analytic versions.

\section{Some Analytic Non-negativity Results and Alternative Representations}\label{Section_non_negativity}

Theorem~\ref{Ali_comb_THM} and Theorem~\ref{Ali_comb_THM2} lead to new non-negativity results and some new summation formulas. The analytic analogue of Theorem~\ref{Ali_comb_THM} is the following:

\begin{theorem}\label{H_positivity_theorem} For $L\geq 2$,  \begin{equation}\label{H_original}H_{L,1}(q):= \frac{q}{(q;q)_{L-1}(1-q^{L+1})} - \left(\frac{1}{(q^2;q)_L} - 1\right) \succcurlyeq 0.\end{equation} 
\end{theorem}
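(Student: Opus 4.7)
The plan is to recognise $H_{L,1}(q)$ as the generating-function form of Theorem~\ref{Ali_comb_THM} and then quote that theorem directly. First I will identify each rational function appearing in $H_{L,1}(q)$ as the generating function for partitions from $\mathcal{A}_{L,1}$ or $\mathcal{A}_{L,2}$, respectively; once this is done, non-negativity of the coefficients of $H_{L,1}(q)$ is immediate from the coefficient-wise set inequality already established.

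For the first term, a partition in $\mathcal{A}_{L,1}$ with $L\geq 2$ has $f_1\geq 1$, arbitrary non-negative $f_2,\ldots,f_{L-1}$, $f_L=0$ (because $\delta_{L,1}=0$ for $L\geq 2$), and arbitrary $f_{L+1}$. Multiplying the corresponding one-part generating series yields
\[\frac{q}{1-q}\cdot\prod_{i=2}^{L-1}\frac{1}{1-q^{i}}\cdot\frac{1}{1-q^{L+1}}=\frac{q}{(q;q)_{L-1}(1-q^{L+1})},\]
which matches the first piece of $H_{L,1}(q)$. For the second term, $\mathcal{A}_{L,2}$ is the set of non-empty partitions with parts in $\{2,3,\ldots,L+1\}$, so its generating function is $1/(q^2;q)_L-1$, the $-1$ removing the empty partition. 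Combining these identifications,
\[H_{L,1}(q)=\sum_{N\geq 0}\Bigl(|\{\pi\in\mathcal{A}_{L,1}:|\pi|=N\}|-|\{\pi\in\mathcal{A}_{L,2}:|\pi|=N\}|\Bigr)q^{N}.\]

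Finally, I invoke Theorem~\ref{Ali_comb_THM}: for every $N\geq 1$ the bracketed difference is non-negative, while at $N=0$ both sets are empty (an element of $\mathcal{A}_{L,1}$ must contain the part $1$, and elements of $\mathcal{A}_{L,2}$ are required to be non-empty), so the constant term vanishes. Hence every coefficient of $H_{L,1}(q)$ is non-negative, i.e.\ $H_{L,1}(q)\succcurlyeq 0$.

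There is essentially no obstacle; the only point requiring care is bookkeeping of which frequencies contribute which single-part factors, in particular using $\delta_{L,1}=0$ to enforce $f_L=0$ so that the $i=L$ factor is absent, producing $(q;q)_{L-1}$ rather than $(q;q)_{L}$ in the denominator of the first term.
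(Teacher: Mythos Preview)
Your proof is correct and follows essentially the same approach as the paper: identify the two rational functions as the generating functions for $\mathcal{A}_{L,1}$ and $\mathcal{A}_{L,2}$, then appeal to Theorem~\ref{Ali_comb_THM}. The only difference is that you spell out the factor-by-factor bookkeeping and the $N=0$ check explicitly, whereas the paper simply states the identifications.
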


We can, and will, extend the definition of $H_{L,1}(q)$ for $L=1$ case, but in this case, the expression simplifies to $q/(1+q)$ and is not non-negative.

\begin{proof}
All we need to point out is that \[\frac{q}{(q;q)_{L-1}(1-q^{L+1})}\] is the generating function for the number of partitions from the set $\mathcal{A}_{L,1}$ and that\[\frac{1}{(q^2;q)_L} - 1\] is the generating function for the number of partitions from the set $\mathcal{A}_{L,2}$. Theorem~\ref{Ali_comb_THM} proves the non-negativity assertion for $H_{L,1}(q)$, where $L\geq 2$.
\end{proof}

On that note any expression of the following form is non-negative \begin{equation}\label{non_neg_products}\prod_{i \in \mathcal{I}}\frac{1}{(1-q^i)} -1 \succcurlyeq 0, \end{equation} for any $\mathcal{I}\subset \mathbb{N}$. The first term can be thought as the generating function for the partitions with parts in the set $\mathcal{I}$ and the $-1$ term can be interpreted as taking away the empty partition from the calculations. This type of \textit{``reciprocal product take away one"} expressions of the form \eqref{non_neg_products} will appear in our future calculations, and they are always going to be non-negative by this observation.

The coefficients of $H_{1,1}(q) = q/(1+q)$ are consistent with our earlier observations about the $\mathcal{A}_{1,1}$ and $\mathcal{A}_{1,2}$, which came immediately before the proof of Theorem~\ref{Ali_comb_THM}. In general, we can write $H_{L,1}(q)$ abstractly, as the difference of generating functions. For any non-negative $L$, \[H_{L,1}(q) = \sum_{\pi\in\mathcal{A}_{L,1}}q^{|\pi|} - \sum_{\pi\in\mathcal{A}_{L,2}}q^{|\pi|}. \] 

Now that we know the relation between the injections $\gamma^*$, and $\gamma$ defined in the proof of Theorem~\ref{Ali_comb_THM} and $H_{L,1}(q)$, we can find an alternative expression for $H_{L,1}(q)$ by considering the elements of $\mathcal{A}_{L,1}$ that are not an image of these injections. For $L=2$, it is clear that the partitions of type \begin{enumerate}[i.]
\item $(1^1,2^0,3^{f_3})$, where $f_3$ is a non-negative integer,\\[-1.5ex] 
\item $(q^{2j+5},2^0,3^{f_3})$ for $j$ and $f_3$ non-negative integers,
\end{enumerate} 
are the elements of $\mathcal{A}_{2,1}$, which are not in the range of $\gamma^*$. The generating functions of such partitions can be easily written as 
\begin{equation}\label{H_sec_1}\frac{q}{1-q^3} + \frac{q^5}{(1-q^2)(1-q^3)}.\end{equation}

Let $L\geq 3$, given partition $\pi\in\mathcal{A}_{L,1} \backslash \gamma(\mathcal{A}_{L,2})$, $\pi$ can have one of the following three forms: 
\begin{enumerate}[i.]
\item For $2 < s < L+1$, $(1^{tL+1},(s-1)^{k+2},\dots,L^0,(L+1)^{f_{L+1}})$, where $t$ and $k$ are non-negative integers,\\[-1.5ex]
\item $(1^{kL+1},L^0,(L+1)^{f_{L+1}})$, where $k \geq 2$ or $0$,\\[-1.5ex]
\item $(1^{kL+r},\dots,L^0,(L+1)^{f_{L+1}})$, where $r\in\{3,\dots,L\}$ and $k$ is a non-negative integer.
\end{enumerate}

The generating functions for these cases are given by the first term, the following two, and the last term in the following expression for $L\geq 3$, respectively.

\begin{equation}\label{H_sec_2}\sum_{s=2}^{L-1} \frac{q^{2s+1}}{(q^s;q)_{L+2-s}} + \frac{q^{2L+1}}{(1-q^L)(1-q^{L+1})} + \frac{q}{1-q^{L+1}} + \frac{q^3(1+q+\dots+q^{L-3})}{(q^2;q)_{L}}.\end{equation}

This alternative formula \eqref{H_sec_2} can be shortened a little by combining the first two terms and rewriting the last.

\begin{equation}\label{H_secondary}H_{L,1}(q) = \sum_{s=2}^{L} \frac{q^{2s+1}}{(q^s;q)_{L+2-s}}  + \frac{q}{1-q^{L+1}} + \frac{q^3(1-q^{L-2})}{(q;q)_{L+1}}. \end{equation}

One important note about \eqref{H_secondary} is that it is written with manifestly non-negative terms. In fact, this formula can be checked to be valid for $L=1$ and $2$ (consistent with \eqref{H_sec_1}\hspace{.5mm}) as well, even though the $\gamma$ map is not defined for these cases. 

Equating \eqref{H_original} and \eqref{H_secondary} yields the formula by using combinatorial means only. 
\begin{theorem}\label{summation_formula_F_recurr} For a positive integer $L$,
\begin{equation}\label{H_finite_summation_formula}\sum_{s=1}^{L} \frac{q^{2s+1}}{(q^s;q)_{L+2-s}} =1- \frac{q}{1-q^{L+1}}+\frac{2q-1}{(q;q)_{L+1}} .\end{equation}
\end{theorem}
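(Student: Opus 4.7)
The plan is to observe that both \eqref{H_original} and \eqref{H_secondary} compute the same series $H_{L,1}(q)$, so equating them yields an identity which, after a small algebraic rearrangement, reduces to the claimed formula \eqref{H_finite_summation_formula}. Concretely, setting the two representations of $H_{L,1}(q)$ equal and isolating the sum on one side gives
\[
\sum_{s=2}^{L}\frac{q^{2s+1}}{(q^s;q)_{L+2-s}} = \frac{q}{(q;q)_{L-1}(1-q^{L+1})} - \frac{1}{(q^2;q)_L} + 1 - \frac{q}{1-q^{L+1}} - \frac{q^3(1-q^{L-2})}{(q;q)_{L+1}}.
\]
Adding the missing $s=1$ summand $q^3/(q;q)_{L+1}$ to both sides promotes the left-hand side to the full $\sum_{s=1}^{L}$, so it remains to verify that the right-hand side then equals $1 - q/(1-q^{L+1}) + (2q-1)/(q;q)_{L+1}$.

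This last step is a routine simplification over the common denominator $(q;q)_{L+1}$. Using $(q;q)_{L+1} = (q;q)_{L-1}(1-q^L)(1-q^{L+1})$, I would rewrite $q/[(q;q)_{L-1}(1-q^{L+1})]$ as $q(1-q^L)/(q;q)_{L+1}$; using $(q^2;q)_L = (q;q)_{L+1}/(1-q)$, I would rewrite $1/(q^2;q)_L$ as $(1-q)/(q;q)_{L+1}$. Combining the two $q^3$ contributions gives $q^3 - q^3(1-q^{L-2}) = q^{L+1}$. The free terms $1$ and $-q/(1-q^{L+1})$ on the right-hand side already match the corresponding terms in the target expression, so the remaining equality reduces to the numerator identity
\[
q(1-q^L) - (1-q) + q^{L+1} = 2q - 1,
\]
which is immediate.

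There is no real obstacle in this argument, but I would verify the small cases $L=1$ and $L=2$ separately, since the injection $\gamma$ used to justify \eqref{H_secondary} is not defined there; the authors explicitly note that \eqref{H_secondary} nonetheless remains valid in these cases, and this should be checked by direct evaluation before invoking that identity. Aside from this bookkeeping, the only real opportunity for error is a sign or shift slip when rewriting the various $q$-Pochhammer symbols over the common denominator $(q;q)_{L+1}$, and that risk is easily controlled by the two Pochhammer reductions displayed above.
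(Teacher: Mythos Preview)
Your proof is correct. The approach you take is precisely the one the paper describes immediately before stating the theorem: equate the combinatorially derived representation \eqref{H_secondary} with the defining expression \eqref{H_original} and simplify. Your algebra is clean and the numerator identity $q(1-q^L)-(1-q)+q^{L+1}=2q-1$ is exactly what is needed; your caveat about separately checking $L=1,2$ (where the injection $\gamma$ is undefined but \eqref{H_secondary} still holds) is also appropriate.

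That said, the paper's formal \emph{proof} environment gives a different, self-contained argument that does not appeal to \eqref{H_secondary} or the injection behind it. There one factors
\[
\sum_{s=1}^{L}\frac{q^{2s+1}}{(q^s;q)_{L+2-s}}=\frac{q^3}{(q;q)_{L+1}}\sum_{s=0}^{L-1}q^{2s}(q;q)_s,
\]
and then evaluates the inner sum via the telescoping identity $\sum_{i=0}^{L-1}q^{i+1}(q;q)_i=1-(q;q)_L$, obtaining
\[
q^2\sum_{i=0}^{L-1}q^{2i}(q;q)_i=1-(1-q)\sum_{i=0}^{L-1}q^{i}(q;q)_i-q^L(q;q)_L.
\]
The advantage of the paper's direct route is independence from the combinatorial machinery of Section~\ref{Section_Ineq}; the advantage of yours is that it makes transparent why the formula arises at all.
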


A direct proof can also be given.

\begin{proof} We start by noting \[\sum_{s=1}^{L} \frac{q^{2s+1}}{(q^s;q)_{L+2-s}} = \frac{q^3}{(q;q)_{L+1}}\sum_{s=0}^{L-1} q^{2s}(q;q)_{s} .\]
Observe that \[\sum_{i=0}^{L-1} q^{i+1}(q;q)_{i} = 1-(q;q)_L.\] This is because the left-hand side sum is the generating function for the number of non-zero partitions $\pi$ into distinct parts $\leq L$ with weights $(-1)^{\nu(\pi)-1}$ written with respect to the largest part of the partitions. Now, it is easy to justify
\[q^2\sum_{i=0}^{L-1} q^{2i}(q;q)_{i} = 1-(1-q)\sum_{i=0}^{L-1} q^{i}(q;q)_{i} -q^L (q;q)_L. \]

By dividing both sides by $q/(q;q)_{L+1}$, and doing the necessary simplifications, one can finish the proof.
\end{proof}

Taking the limit $L\rightarrow\infty$ in \eqref{H_finite_summation_formula}, it is easy to get:

\begin{corollary}\label{different_types_of_H_sub_infty_theorem}\begin{equation}\label{q3_F2_open_formula}
\sum_{s\geq1} \frac{q^{2s+1}}{(q^s;q)_{\infty}} =1-q+\frac{2q-1}{(q;q)_\infty}.
\end{equation}
\end{corollary}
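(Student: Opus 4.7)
The plan is to derive the corollary as the $L\to\infty$ limit of the identity \eqref{H_finite_summation_formula} in Theorem~\ref{summation_formula_F_recurr}. Working in the ring of formal power series in $q$ (or analytically for $|q|<1$), each individual piece on the right-hand side of \eqref{H_finite_summation_formula} has a clean limit: the factor $q/(1-q^{L+1})$ tends to $q$ since $q^{L+1}\to 0$, while $(q;q)_{L+1}\to (q;q)_\infty$. Thus the right-hand side of \eqref{H_finite_summation_formula} converges to $1 - q + (2q-1)/(q;q)_\infty$, which is exactly the right-hand side of \eqref{q3_F2_open_formula}.

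For the left-hand side, I would first extend the finite sum in \eqref{H_finite_summation_formula} to an infinite sum. For each fixed $s\geq 1$ the summand satisfies
\[
\frac{q^{2s+1}}{(q^s;q)_{L+2-s}} \longrightarrow \frac{q^{2s+1}}{(q^s;q)_\infty}
\]
as $L\to\infty$. The terms with index $s\geq L+1$ that we are adding by passing from a finite sum to an infinite one are all of order $q^{2L+3}$ or higher, so they vanish in the formal-series limit (equivalently, for any fixed power $q^N$, only finitely many $s$ contribute, and the coefficient stabilizes as $L$ grows).

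The only step that requires a moment of care is the term-by-term justification of the limit on the left. In the formal power series setting it is immediate, since the coefficient of $q^N$ on the left of \eqref{H_finite_summation_formula} depends only on finitely many $s$ once $L$ is large. If one prefers an analytic justification, one uses that for $|q|<\rho<1$ the inequality $|q^{2s+1}/(q^s;q)_{L+2-s}|\leq |q|^{2s+1}/(|q|;|q|)_\infty$ provides a uniform summable majorant in $s$, allowing the interchange of limit and sum by dominated convergence. Neither route presents any real obstacle; the content of the corollary is entirely in Theorem~\ref{summation_formula_F_recurr}, and this last step is just bookkeeping.
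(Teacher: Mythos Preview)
Your proposal is correct and follows exactly the approach indicated in the paper: take $L\to\infty$ in \eqref{H_finite_summation_formula}, with the limits on both sides being routine. Your added justification via formal power series stabilization (or dominated convergence) is more detail than the paper gives, but the idea is identical.
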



One can also give a direct $q$-hypergeometric proof of Corollary~\ref{different_types_of_H_sub_infty_theorem}. This proof amounts to using the second Heine transformation \eqref{Heine2} followed by the  $q$-binomial theorem \eqref{q_binomial}.



Similar to Theorem~\ref{Ali_comb_THM}, Theorem~\ref{Ali_comb_THM2} also has a $q$-theoretic equivalent.

\begin{theorem}\label{H_positivity_theorem2} For $L\geq 3$,  \begin{equation}\label{H_2_formula} H_{L,2}(q):=q^3 + \delta_{L,4}\ q^9 + \frac{q^2(1-q^{L+1})}{(q^2;q)_{L+1}} - \left(\frac{1}{(q^3;q)_{L}} - 1\right)\succcurlyeq 0.\end{equation}
\end{theorem}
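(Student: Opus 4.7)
The plan is to interpret $H_{L,2}(q)$ as the generating-function translation of the combinatorial inequality in Theorem~\ref{Ali_comb_THM2}, mirroring how Theorem~\ref{H_positivity_theorem} is deduced from Theorem~\ref{Ali_comb_THM}.

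First I would identify the combinatorial content of each rational piece. Because $L\geq 3$ forces $f_{L+1}=\delta_{L,1}=0$, partitions in $\mathcal{B}_{L,1}$ have smallest part $2$ and all parts drawn from $\{2,3,\dots,L,L+2\}$, so
\[
\sum_{\pi\in\mathcal{B}_{L,1}} q^{|\pi|} \;=\; \frac{q^2}{(1-q^2)(1-q^3)\cdots(1-q^L)(1-q^{L+2})} \;=\; \frac{q^2(1-q^{L+1})}{(q^2;q)_{L+1}},
\]
where the leading $q^2$ encodes the mandatory occurrence of $2$ and the numerator factor $(1-q^{L+1})$ cancels the corresponding factor inside $(q^2;q)_{L+1}$ to reflect the ban on part $L+1$. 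Similarly, enumerating non-empty partitions with parts in $\{3,4,\dots,L+2\}$ gives
\[
\sum_{\pi\in\mathcal{B}_{L,2}} q^{|\pi|} \;=\; \frac{1}{(q^3;q)_L} - 1,
\]
where the $-1$ discards the empty partition.

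Next I would observe that the monomials $q^3$ and $\delta_{L,4}q^9$ in \eqref{H_2_formula} are the generating-function counterparts of the correction terms $\delta_{N,3}$ and $\delta_{N,9}\delta_{L,4}$ on the left of \eqref{Ali_comb_inequality2}. Combining everything,
\[
H_{L,2}(q) \;=\; \sum_{N\geq 1}\Bigl(|\{\pi\in\mathcal{B}_{L,1}:|\pi|=N\}| + \delta_{N,3} + \delta_{N,9}\delta_{L,4} - |\{\pi\in\mathcal{B}_{L,2}:|\pi|=N\}|\Bigr) q^N,
\]
and every coefficient on the right is non-negative by Theorem~\ref{Ali_comb_THM2}, which yields the claim.

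No genuine obstacle remains: the combinatorial substance has already been packed into Theorem~\ref{Ali_comb_THM2}, and the present proof only requires recognising each summand of $H_{L,2}(q)$ as a generating function and performing the elementary bookkeeping that removes the forbidden part $L+1$ and the empty partition. The only nontrivial checkpoint is verifying that the numerator factor $(1-q^{L+1})$ really does erase the $(1-q^{L+1})$ from $(q^2;q)_{L+1} = \prod_{i=0}^{L}(1-q^{2+i})$, which is immediate.
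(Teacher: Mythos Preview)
Your proposal is correct and matches the paper's own proof essentially verbatim: identify $\frac{q^2(1-q^{L+1})}{(q^2;q)_{L+1}}$ and $\frac{1}{(q^3;q)_L}-1$ as the generating functions for $\mathcal{B}_{L,1}$ and $\mathcal{B}_{L,2}$, read $q^3+\delta_{L,4}q^9$ as the correction terms from \eqref{Ali_comb_inequality2}, and invoke Theorem~\ref{Ali_comb_THM2}. If anything, you supply slightly more detail than the paper does in justifying the generating-function formulas.
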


\begin{proof} For $L\geq 3$, the generating functions for the number of partitions coming from the sets $\mathcal{B}_{L,1}$ and $\mathcal{B}_{L,2}$ are \[ \frac{q^2(1-q^{L+1})}{(q^2;q)_{L+1}} \text{  and  } \frac{1}{(q^3;q)_{L}} - 1,\] respectively. The correction terms for norm 3 and the one time correction term for norm 9 cases are also added analogous to \eqref{Ali_comb_inequality2}. Theorem~\ref{Ali_comb_THM2} proves the claimed non-negativity.
\end{proof}

The $H_{L,2}(q)$ can be extended to the positive integers and, in general, can be written as a difference of generating functions with two extra factors as \[H_{L,2}(q)= q^3 + \delta_{L,4}q^9 +\sum_{\pi\in\mathcal{B}_{L,1}} q^{|\pi|}- \sum_{\pi\in\mathcal{B}_{L,2}} q^{|\pi|}. \] The initial cases of $H_{L,2}$ are as follows \begin{align*}
H_{1,2}(q) &= q^3 + \frac{q^2}{1-q^3}-\frac{q^3}{1-q^3}=q^2+q^5-q^6+\dots,\\
H_{2,2}(q) &= q^3 + \frac{q^2}{(1-q^2)(1-q^4)} - \left( \frac{1}{(1-q^3)(1-q^4)}-1 \right)=q^2+q^6-q^7+\dots,
\end{align*} which clearly show that Theorem~\ref{H_positivity_theorem2} fails for $L=1$ and 2.

Similar to the treatment of the injection $\gamma$ after Theorem~\ref{H_positivity_theorem}, one can look for the partitions that are outside of the image of $\Gamma^*_1$, $\Gamma_1$ etc. and write the \eqref{H_2_formula} expression with manifestly non-negative terms. Yet, the increase in the number of cases are making this study not necessarily harder, but messier.

Considering partitions outside of the image of $\Gamma^*_1$, defined in the proof of Theorem~\ref{Ali_comb_THM2},  for $L=3$ case implies the summation formula:

\begin{equation}\label{H_32_secondary} H_{3,2}(q) = \frac{q^{10}}{(q^3;q)_3} + \frac{q^{11}}{(q^3;q^2)_2} + \frac{q^2}{(1-q^5)},\end{equation}

where the partitions in $\mathcal{B}_{3,1} \backslash \Gamma^*_1(\mathcal{B}_{3,2})$ are of the form\begin{enumerate}[i.] 
\item $(2^{2j+5},3^{f_3},5^{f_5})$, where $j$ is a non-negative integer,\\[-1.5ex]
\item or $(2^1,3^{f_3},5^{f_5})$, where $f_3 \geq 3$,\\[-1.5ex]
\item or $(2^1,5^{f_5})$,
\end{enumerate}
according to the three summation terms on the right-hand side of \eqref{H_32_secondary}. Recall that $f_i$ is non-negative for any $i$. For larger odd $L$ values that fall under the injective map $\Gamma_1$, we can repeat this process and write $H_{L,1}$ as a sum of manifestly non-negative terms. Needless to say, $f_{L+1}=0$ in these cases. The partitions in $\mathcal{B}_{L,1} \backslash \Gamma_1(\mathcal{B}_{L,2})$, for odd $L>3$, are ones of the form:
\begin{enumerate}[i.] 
\item $(2^{i+k(L+1)/2},3^{f_3},\dots,(L+1)^0, (L+2)^{f_{L+2}})$, where $(L-1)/2\geq i \geq 2$ and $k~+~\sum_{j=i+1}^{(L-1)/2}~f_{2j}~>~0$,\\[-1.5ex]
\item $(2^{1+k(L+1)/2},(L+2)^{f_{L+2}})$, where $k$ is a non-negative integer,\\[-1.5ex]
\item $(2^{1+k(L+1)/2}, s^{3+f_s},\dots,(L+1)^0 ,(L+2)^{f_{L+2}})$, where $L\geq s\geq 4$,\\[-1.5ex]
\item $(2^{1+k(L+1)/2}, s^2,\dots,(L+1)^0 ,(L+2)^{f_{L+2}})$, where $L \geq s\geq (L+5)/2$,\\[-1.5ex]
\item $(2^{1+k(L+1)/2}, s^2,\dots,(L+1)^0 ,(L+2)^{f_{L+2}})$, where $(L+3)/2 \geq s\geq 4$ and\newline $k + \sum_{j=s+1}^{2s-2}f_j+\sum_{j=s}^{(L-1)/2}f_{2j}>0$,\\[-1.5ex]
\item $(2^{1+k(L+1)/2}, s^1,\dots, (L+1)^0,(L+2)^{f_{L+2}})$, where $s$ is odd and $L\geq s\geq 5$, $k+\sum_{j=(s+1)/2}^{(L-1)/2} f_j>0$, \\[-1.5ex]
\item $(2^{1+k(L+1)/2}, s^1,\dots,(L+1)^0,(L+2)^{f_{L+2}})$, where $(L-1)/2\geq s>4$ even,\\[-1.5ex]
\item $(2^{1+k(L+1)/2},4^1,\dots,(L+1)^0,(L+2)^{f_{L+2}})$, where $k+\sum_{j=3}^{(L-1)/2}f_{2j}>0$,\\[-1.5ex]
\item $(2^{1+k(L+1)/2},3^{1+f_3},4^{2+f_4},\dots,(L+1)^0,(L+2)^{f_{L+2}})$,
\item $(2^{1+k(L+1)/2},3^{1+f_3},4^{\alpha},\dots,(L+1)^0,(L+2)^{f_{L+2}})$, where $k+\sum_{j=3}^{(L-1)/2}f_{2j}>0$ and $\alpha =0$ or 1,\\[-1.5ex]
\item $(2^{1},3^{3+f_3},\dots)$, where $\sum_{i=2}^{(L-1)/2} f_{2i} =0$.
\end{enumerate}

Using these cases one can rewrite $H_{L,2}(q)$ analytically with manifestly non-negative generating functions (recall \eqref{non_neg_products}).
\begin{align}\label{long_formula} q^3& + \frac{q^2(1-q^{L+1})}{(q^2;q)_{L+1}} - \left(\frac{1}{(q^3;q)_{L}} - 1\right)\\\nonumber&= \sum_{j=2}^{(L-1)/2} \frac{q^{2j} }{(q^3;q^2)_{(L+1)/2}(q^4;q^2)_{j-1}}\left( \frac{1}{(q^{2j+2};q^2)_{(L+1)/2-j}} -1 \right) + \frac{q^2}{(q^{L+1},q)_2} + \sum_{j=4}^L \frac{q^{3j+2}}{(q^j,q)_{L+3-j}}\\\nonumber&+ \sum_{j=(L+5)/2}^L \frac{q^{2j+2}}{(q^{j+1},q)_{L+2-j}} +\sum_{j=4}^{(L+3)/2}\frac{q^{2j+2}}{(q^{2j-1};q^2)_{(L+5)/2-j}} \left(\frac{1}{(q^{j+1};q)_{j-2}(q^{2j};q^2)_{(L+3)/2-j}}-1 \right)\\\nonumber
&+\sum_{j=2}^{(L-1)/2} \frac{q^{2j+3}}{(q^{2j+3};q^2)_{(L+1)/2-j}}\left(\frac{1}{(q^{2j+2};q^2)_{(L+1)/2-j}} -1\right)+\sum_{j=3}^{(L-1)/2} \frac{q^{2j+2}}{(q^{2j+1};q)_{L+2-2j}}\\\nonumber
&+ \frac{q^6}{(q^5;q^2)_{(L-1)/2}}\left(\frac{1}{(q^6;q^2)_{(L-3)/2}}-1 \right) + \frac{q^{13}}{(q^3;q)_L} + \frac{q^5+q^9}{(q^3;q^2)_{(L+1)/2}}\left( \frac{1}{(q^6;q^2)_{(L-3)/2}}-1 \right)\\\nonumber
&+\frac{q^{11}}{(q^3;q^2)_{(L+1)/2}}.
\end{align}
On the right-hand side of \eqref{long_formula}, the $k$-th term is the generating function for the number of partitions from the $k$-th $\Gamma_1$ unmapped case described above, where $k\in\{1,2,\dots,11\}$. Also note that as $L$ tends to infinity \eqref{long_formula} simplifies significantly, and can be reduced to \eqref{q3_F2_open_formula} after some labor.

The interested reader can also write $H_{L,2}(q)$ with only non-negative terms for even choices of $L$ with the same type of argument for $\Gamma_2^*$ and $\Gamma_2$ injections.

\section{An Alternative Proof of Theorem~\ref{Ordinary_Partitions_Combinatorial_Weighted_Theorem} and a Proof of Theorem~\ref{Smallest_part_positivity_THM}}\label{Section_Proof_of_First_theorem}

We start this section by recalling Theorem~\ref{Ordinary_Partitions_Combinatorial_Weighted_Theorem}:

\noindent \textbf{Theorem~\ref{Ordinary_Partitions_Combinatorial_Weighted_Theorem}.}
\begin{equation*}
\sum_{\pi\in\U} (-1)^{s(\pi)+1} q^{|\pi|} = \sum_{\pi\in\U} t(\pi) q^{|\pi|}.
\end{equation*}

Theorem~\ref{Ordinary_Partitions_Combinatorial_Weighted_Theorem} ---though proven by Jackson's transformation in \cite{BerkovichUncu4}--- can also be proven using the analytic generating functions for the partitions into distinct parts counted with $\pm 1$ weights depending on the parity of their ranks. Observe that \begin{equation}\label{rank_pf}\sum_{\pi\in\mathcal{D}} (-1)^{r(\pi)}q^{|\pi|}  = \sum_{n\geq 0} (-1)^n (q;q)_n q^{n+1}= \sum_{n\geq 1} \frac{q^{n(n+1)/2}}{(-q;q)_{n}},\end{equation} where $\mathcal{D}$ is the set of all partitions into distinct parts (where in a partition every frequency is either 0 or 1) with positive norm. Dividing both sides of the latter equality of \eqref{rank_pf} with the $q$-factorial $(q;q)_\infty$, the right-hand side series of \eqref{rank_pf} becomes the right-hand side of \eqref{Ordinary_Partitions_Analytic_Identity}. Also recall \begin{equation}\label{auxiliary_gen_func_line}\sum_{\pi\in\mathcal{U}} (-1)^{s(\pi)+1} q^{|\pi|} =\sum_{n\geq 1} (-1)^{n+1}\frac{q^n}{(q^n;q)_\infty} = \sum_{n\geq 1} \frac{q^n}{1+q^n}\frac{ 1}{(q;q)_{n-1}}.\end{equation}  The middle-term of \eqref{rank_pf} after the division with $(q;q)_\infty$ is the same as the middle term of \eqref{auxiliary_gen_func_line}. These observations together yield \eqref{Ordinary_Partitions_Analytic_Identity} and prove the Theorem~\ref{Ordinary_Partitions_Combinatorial_Weighted_Theorem}. The far right series in \eqref{rank_pf} first arose in Ramanujan's lost notebook and has been discussed in detail in \cite{Andrews_quote_lost_V} and \cite{AndrewsInvenciones}.

We would also like to remind the reader of the non-negativity question:\\

\noindent \textbf{Theorem~\ref{Smallest_part_positivity_THM}.} 
For $L\geq 1$,
\begin{equation*}
\sum_{\substack{\pi\in\mathcal{U}\\l-s\leq L}} (-1)^{s+1}q^{|\pi|} = \sum_{s\geq 1} \frac{(-1)^{s+1}q^s}{(q^s;q)_{L+1}}\succcurlyeq 0.
\end{equation*}
We define the following difference of generating functions \begin{equation}\label{FL_def}G_{L,1}(q) := \sum_{\substack{\pi\in\mathcal{U}\\s=1\\l-s\leq L}} q^{|\pi|} - \sum_{\substack{\pi\in\mathcal{U}\\s\geq 2\\l-s\leq L}} q^{|\pi|}. \end{equation} The closed analytic formulations of the two generating functions on the right-hand side of \eqref{FL_def} can be easily explained. All the partitions counted by the first generating function, \[ \sum_{\substack{\pi\in\mathcal{U}\\s=1\\l-s\leq L}} q^{|\pi|}, \] in \eqref{FL_def} has 1 as their smallest part and the largest part of these partitions can be at most $L+1$ due to the difference condition between the largest and the smallest parts. Therefore, we have \begin{equation}\label{FL_first_sum}
\sum_{\substack{\pi\in\mathcal{U}\\s=1\\l-s\leq L}} q^{|\pi|} = \frac{q}{(q;q)_{L+1}}.
\end{equation} For the second generating function of \eqref{FL_def}, we formulate the generating function as a sum over the number of parts. Let $\pi$ be a partition into $n$, parts where the smallest part $\geq 2$. We can clearly understand that $|\pi|\geq 2n$ since there are $n$ parts and all the parts are $\geq 2$. The whole column over the smallest part of the partition $\pi$ is generated by the $q$-factor \[\frac{q^{2n}}{1-q^n}.\] Stripping the column of the smallest part from the far left of the Ferrers diagram of $\pi$, we are left with a new partition with $\leq n-1$ parts, where the largest part is $\leq L$. These partitions are generated by the $q$-binomial coefficient \[{L+(n-1)\brack n-1}_q:= \frac{(q;q)_{L+(n-1)}}{(q;q)_{L} (q;q)_{n-1}}.\] Hence, putting these together, the analytic formula of the second sum in \eqref{FL_def} is \begin{equation}\label{FL_second_sum}
\sum_{\substack{\pi\in\mathcal{U}\\s\geq2\\l-s\leq L}} q^{|\pi|} = \sum_{n\geq 1} \frac{q^{2n}}{1-q^n} {L+(n-1)\brack n-1}_q.
\end{equation}

Putting \eqref{FL_first_sum} and \eqref{FL_second_sum} in \eqref{FL_def}, we get the following formula: \begin{equation}\label{F_analytic}
G_{L,1}(q) = \frac{q}{(q;q)_{L+1}} - \sum_{n\geq 1 }\frac{q^{2n}}{1-q^n} {L+(n-1) \brack (n-1)}_q.
\end{equation}

We can relate the $G_{L,1}(q)$ function with the $H_{L,1}(q)$ and also talk about its non-negativity.

\begin{theorem}\label{F_positivity_theorem}
For $L\geq 1$, \begin{equation}\label{F_H_relation}G_{L,1}(q) = \frac{H_{L,1}(q)}{1-q^L} \succcurlyeq 0\end{equation}
\end{theorem}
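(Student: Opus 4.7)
The plan is to establish the equality $(1-q^L)G_{L,1}(q) = H_{L,1}(q)$ directly from the analytic formulas, and then transfer non-negativity from $H_{L,1}(q)$ (already established in Theorem~\ref{H_positivity_theorem} for $L\geq 2$) to $G_{L,1}(q)$ using the manifestly non-negative factor $1/(1-q^L)$, with the $L=1$ case checked by hand.

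First I would multiply \eqref{F_analytic} through by $(1-q^L)$. The ``easy half'' is the first summand:
\[
(1-q^L)\cdot\frac{q}{(q;q)_{L+1}} = \frac{q}{(q;q)_{L-1}(1-q^{L+1})},
\]
which is exactly the first term of $H_{L,1}(q)$ in \eqref{H_original}. So everything reduces to showing
\[
(1-q^L)\sum_{n\geq 1}\frac{q^{2n}}{1-q^n}{L+n-1\brack n-1}_q \;=\; \frac{1}{(q^2;q)_L}-1.
\]

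The main step is to recognize the left side as a $_1\phi_0$. Writing ${L+n-1\brack n-1}_q = (q;q)_{L+n-1}/[(q;q)_L(q;q)_{n-1}]$ and using the factorization $(q;q)_{L+n-1}=(q;q)_{L-1}(q^L;q)_n$, the $n$-th summand simplifies so that, after telescoping the $(1-q^L)$ against $(q;q)_L/(q;q)_{L-1}$ and combining $(1-q^n)(q;q)_{n-1}=(q;q)_n$, we obtain
\[
(1-q^L)\sum_{n\geq 1}\frac{q^{2n}}{1-q^n}{L+n-1\brack n-1}_q = \sum_{n\geq 1}\frac{(q^L;q)_n}{(q;q)_n}q^{2n}.
\]
Adding and subtracting the $n=0$ term, the right side becomes
\[
{}_1\phi_0\!\left(\genfrac{}{}{0pt}{}{q^L}{-};q,q^2\right) - 1,
\]
and the $q$-binomial theorem \eqref{q_binomial} with $a=q^L$, $z=q^2$ evaluates this as $(q^{L+2};q)_\infty/(q^2;q)_\infty - 1 = 1/(q^2;q)_L - 1$, which is precisely what is required.

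Having proved $G_{L,1}(q)(1-q^L)=H_{L,1}(q)$, I would finish by noting that $1/(1-q^L)=\sum_{k\geq 0}q^{kL}$ has non-negative coefficients. For $L\geq 2$ the non-negativity $H_{L,1}(q)\succcurlyeq 0$ is given by Theorem~\ref{H_positivity_theorem}, and the product of two non-negative series is non-negative. For $L=1$ the same algebraic identity gives $G_{1,1}(q)=\bigl(q/(1+q)\bigr)/(1-q)=q/(1-q^2)$, whose coefficients are $0$ or $1$, so non-negativity holds in this edge case too. I do not expect any real obstacle; the only delicate point is the bookkeeping in reducing the sum to a $_1\phi_0$, which the factorization $(q;q)_{L+n-1}=(q;q)_{L-1}(q^L;q)_n$ makes transparent.
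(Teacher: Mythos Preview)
Your proposal is correct and follows essentially the same approach as the paper: reduce to showing $(1-q^L)\sum_{n\geq 1}\frac{q^{2n}}{1-q^n}{L+n-1\brack n-1}_q = \frac{1}{(q^2;q)_L}-1$, rewrite the summand via the $q$-binomial coefficient identity (the paper states it as $\frac{1}{1-q^n}{L+n-1\brack n-1}_q=\frac{1}{1-q^L}{L-1+n\brack n}_q$, which is the same content as your factorization $(q;q)_{L+n-1}=(q;q)_{L-1}(q^L;q)_n$), apply the $q$-binomial theorem \eqref{q_binomial}, and then deduce non-negativity from Theorem~\ref{H_positivity_theorem} for $L\geq 2$ together with the direct check $G_{1,1}(q)=q/(1-q^2)$.
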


\begin{proof}
We start by showing the functional relation between $G_{L,1}(q)$ and $H_{L,1}(q)$. Comparing the right-hand sides of \eqref{H_original} and \eqref{F_analytic}, it is obvious that the first terms satisfy the claimed relation. Then, the problem reduces to justifying \[ \sum_{n\geq 1 }\frac{q^{2n}}{1-q^n} {L+(n-1) \brack (n-1)}_q=\frac{1}{1-q^L}\left(\frac{1}{(q^2;q)_L} -1 \right) .\] Observe that \begin{equation}\label{important_shift}\frac{1}{1-q^n} {L+(n-1) \brack (n-1)}_q = \frac{1}{1-q^L} {(L-1)+n \brack n}_q.\end{equation} Applying \eqref{important_shift} and the $q$-binomial theorem \eqref{q_binomial}, we can verify the formula of $G_{L,1}(q)$: \begin{align*}
\sum_{n\geq 1 }\frac{q^{2n}}{1-q^n} {L+(n-1) \brack (n-1)}_q &= \frac{1}{1-q^L}\sum_{n\geq 1 }q^{2n} {(L-1)+n \brack n}_q\\
&=  \frac{1}{1-q^L}\left(-1 + \sum_{n\geq 0 }q^{2n} \frac{(q^L;q)_n}{(q;q)_n}\right) \\
&=  \frac{1}{1-q^L}\left(-1 + \frac{1}{(q^2;q)_L}\right).
\end{align*}
The positivity claim on $G_{L,1}(q)$, for $L\geq 2$ follows from Theorem~\ref{H_positivity_theorem} as $1/(1-q^L)$ and $H_{L,1}(q)$ both have non-negative series, their multiplication has non-negative series. The $L=1$ case can be directly/algebraically checked from \eqref{F_analytic}. For $L=1$, the expression \eqref{F_analytic} reduces to $q/(1-q^2)$ and hence, is represented by a power series with non-negative coefficients.
\end{proof}

Theorem~\ref{F_positivity_theorem} implies Theorem~\ref{Smallest_part_positivity_THM}. 

\begin{proof} \textit{(Theorem~\ref{Smallest_part_positivity_THM}) }Observe that \begin{equation}\label{abstract_positivity_of_alternating_smallest_part}\sum_{\substack{\pi\in\mathcal{U}\\l-s\leq L}} (-1)^{s+1}q^{|\pi|} = G_{L,1}(q) + 2\cdot\sum_{\substack{\pi\in\mathcal{U}\\s>1\\ s\equiv 1 (\text{mod}\,2)\\l-s\leq L}} q^{|\pi|},\end{equation} by the original definition of $G_{L,1}(q)$, equation \eqref{FL_def}. For $L\geq 1$, we have $G_{L,1}(q)\succcurlyeq 0$ from Theorem~\ref{F_positivity_theorem}. It is also clear that the second sum of \eqref{abstract_positivity_of_alternating_smallest_part}, \[\sum_{\substack{\pi\in\mathcal{U}\\s>1\\ s\equiv 1 (\text{mod}\,2)\\l-s\leq L}} q^{|\pi|},\] is also non-negative. Hence, we get our claim \begin{equation}\tag{\ref{weighted_sum_of_bounded_difference}}\sum_{\substack{\pi\in\mathcal{U}\\l-s\leq L}} (-1)^{s+1}q^{|\pi|}\succcurlyeq 0.\end{equation} \end{proof}

One can also define the analogous function \[G_{L,2}(q) := \sum_{\substack{\pi\in\mathcal{U}\\s=2\\l-s\leq L}} q^{|\pi|} - \sum_{\substack{\pi\in\mathcal{U}\\s\geq 3\\l-s\leq L}} q^{|\pi|},\] which can be written analytically as \[G_{L,2}(q) = \frac{q^2}{(q^2;q)_{L+1}} - \sum_{n\geq 1 }\frac{q^{3n}}{1-q^n} {L+(n-1) \brack (n-1)}_q.\] Keeping in mind the identity \eqref{important_shift} as it is used in the proof of Theorem~\ref{F_positivity_theorem}, it is easy to prove the following theorem:

\begin{theorem} For $L\geq 3$, \[G_{L,2}(q) = \frac{H^*_{L,2}(q)}{1-q^L},\] where \[H^*_{L,2}(q):= \frac{q^2(1-q^L)}{(q^2;q)_{L+1}} - \left(\frac{1}{(q^3;q)_L} -1 \right).\]
\end{theorem}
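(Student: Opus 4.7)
The plan is to mimic the proof of Theorem~\ref{F_positivity_theorem} almost verbatim, since the only change is the role of the smallest part (2 instead of 1) and the corresponding shift of exponents by $q$. First I would divide $H^*_{L,2}(q)$ by $1-q^L$ to recast the target as
\[
\frac{H^*_{L,2}(q)}{1-q^L} \;=\; \frac{q^2}{(q^2;q)_{L+1}} \;-\; \frac{1}{1-q^L}\left(\frac{1}{(q^3;q)_L}-1\right),
\]
so that, by comparison with the explicit formula for $G_{L,2}(q)$ given just before the theorem, the claim reduces to proving the identity
\[
\sum_{n\geq 1}\frac{q^{3n}}{1-q^n}{L+(n-1) \brack n-1}_q \;=\; \frac{1}{1-q^L}\left(\frac{1}{(q^3;q)_L}-1\right).
\]

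Next I would apply the shift identity \eqref{important_shift}, namely $\frac{1}{1-q^n}{L+(n-1) \brack n-1}_q = \frac{1}{1-q^L}{(L-1)+n \brack n}_q$, to pull the factor $1/(1-q^L)$ out of the sum and rewrite the left-hand side as
\[
\frac{1}{1-q^L}\sum_{n\geq 1} q^{3n}{(L-1)+n \brack n}_q \;=\; \frac{1}{1-q^L}\left(-1 + \sum_{n\geq 0}\frac{(q^L;q)_n}{(q;q)_n}\,q^{3n}\right).
\]
Then I would invoke the $q$-binomial theorem \eqref{q_binomial} with $a=q^L$ and $z=q^3$, which gives
\[
\sum_{n\geq 0}\frac{(q^L;q)_n}{(q;q)_n}\,q^{3n} \;=\; \frac{(q^{L+3};q)_\infty}{(q^3;q)_\infty} \;=\; \frac{1}{(q^3;q)_L}.
\]
Substituting this back matches the right-hand side of the reduced identity and closes the argument.

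Because every step is either a direct algebraic manipulation or an invocation of a previously displayed tool (the shift \eqref{important_shift} and the $q$-binomial theorem \eqref{q_binomial}), I do not expect any real obstacle here; the proof is essentially parallel to that of Theorem~\ref{F_positivity_theorem}. The hypothesis $L\geq 3$ enters only through the interpretation tying $1/(q^3;q)_L - 1$ to the generating function of $\mathcal{B}_{L,2}$, and it does not affect the analytic manipulation itself, which is valid for all $L\geq 1$.
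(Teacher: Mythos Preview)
Your proposal is correct and follows exactly the approach the paper indicates: the paper does not spell out a proof but remarks that, ``keeping in mind the identity \eqref{important_shift} as it is used in the proof of Theorem~\ref{F_positivity_theorem}, it is easy to prove'' this result. Your argument carries out precisely that parallel computation---applying \eqref{important_shift} and then the $q$-binomial theorem \eqref{q_binomial} with $a=q^L$, $z=q^3$---so there is nothing to add.
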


With this definition, we can make the similar claim to Conjecture~\ref{Sets_Conjecture2} about $G_{L,2}(q)$. 
\begin{conjecture}\label{F_2_conj} For $L= 3$ and 4, \[G_{L,2}(q) + q^3 + q^9\succcurlyeq 0 ,\] and for $L\geq 5$ \[G_{L,2}(q)+q^3\succcurlyeq 0.\]
\end{conjecture}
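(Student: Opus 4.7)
The plan is to mirror the proof of Theorem~\ref{F_positivity_theorem}, combined with the injective arguments used in the proof of Theorem~\ref{Ali_comb_THM2}. Using the identity $G_{L,2}(q) = H^*_{L,2}(q)/(1-q^L)$ already established in the paper, I would first rewrite, for $L \geq 5$,
\[
G_{L,2}(q) + q^3 = \frac{H^*_{L,2}(q) + q^3(1-q^L)}{1-q^L},
\]
and similarly, for $L \in \{3, 4\}$,
\[
G_{L,2}(q) + q^3 + q^9 = \frac{H^*_{L,2}(q) + (q^3 + q^9)(1 - q^L)}{1 - q^L}.
\]
Since $1/(1-q^L) \succcurlyeq 0$, it is enough to show that the respective numerators are non-negative as power series in $q$.

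Next, I would interpret the numerator combinatorially. The generating function $\frac{q^2(1-q^L)}{(q^2;q)_{L+1}}$ enumerates partitions with smallest part exactly $2$ and all parts in $\{2, 3, \dots, L+2\} \setminus \{L\}$---precisely the set $\mathcal{C}^*_{L,2,1}$ of Conjecture~\ref{Sets_Conjecture2}---while $\frac{1}{(q^3;q)_L} - 1$ enumerates non-empty partitions with parts in $\{3, 4, \dots, L+2\}$, namely $\mathcal{C}_{L,2,2}$. Non-negativity of the numerator in the $L \geq 5$ regime is therefore equivalent to the set-size inequality
\[
|\{\pi \in \mathcal{C}^*_{L,2,1} : |\pi| = N\}| + \delta_{N,3} \geq |\{\pi \in \mathcal{C}_{L,2,2} : |\pi| = N\}| + \delta_{N,L+3},
\]
for every $N \geq 1$; the $L \in \{3,4\}$ case adds an extra $\delta_{N,9}$ on the left and $\delta_{N,L+9}$ on the right. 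This is, up to the explicit correction terms, the $s = 2$ case of Conjecture~\ref{Sets_Conjecture2}.

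To prove this combinatorial inequality, I would construct an injection $\Gamma : \mathcal{C}_{L,2,2} \to \mathcal{C}^*_{L,2,1}$, with the lone unmatched element $(3)$ accounting for the $+\delta_{N,3}$ correction, following the blueprint of the maps $\Gamma_1, \Gamma_2, \Gamma^*_1, \Gamma^*_2$ from the proof of Theorem~\ref{Ali_comb_THM2}. The generic transformation, given $\pi \in \mathcal{C}_{L,2,2}$ with smallest part $s$, is to peel one copy of $s$ and replace it by $2 + (s-2)$ when $s - 2 \notin \{1, L\}$; by $2 + 2 + (s-4)$ when $s - 2 = L$; and, when $s = 3$, to consume several copies of $3$ and rewrite them as $(2,2,2)$, $(2,3,4)$, $(2,2,5)$, etc., with the choice dictated by the rest of $\pi$. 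The residue class of the frequency of $2$ in the image, modulo a period depending on $L$, should identify which sub-case produced the image, giving injectivity. As in Theorem~\ref{Ali_comb_THM2}, the parity of $L$ forces separate definitions of $\Gamma$ for odd and even $L$, and in the small cases $L \in \{3, 4\}$ an unavoidable collision at norm $9$ produces the $+\delta_{N,9}$ correction, exactly analogous to the $\delta_{N,9}\delta_{L,4}$ term there.

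The main obstacle will be the case-by-case design of this injection. It is more delicate than the corresponding constructions in the proof of Theorem~\ref{Ali_comb_THM2}, because the impermissible part $L$ sits strictly inside the range $\{2, \dots, L+2\}$ rather than at the upper boundary, so the natural splitting operations must steer around creating an $L$-part in the image. In particular, the $-\delta_{N,L+3}$ term on the right side requires that the injection leave at least one $\mathcal{C}^*_{L,2,1}$-partition of norm $L+3$ outside its image---an always-available ``reserve'' partition at that level---which must be pinpointed explicitly in the construction. I expect the number of sub-cases to roughly double those of $\Gamma_1$ and $\Gamma_2$, and careful modular bookkeeping on the frequency of $2$ will again be the key device to both verify injectivity and isolate the exact correction terms $q^3$ and $q^9$.
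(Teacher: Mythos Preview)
The statement you are attempting to prove is labeled a \emph{Conjecture} in the paper, and the paper contains no proof of it. It is explicitly listed among the open problems in the Outlook section, and it is tied there to Conjecture~\ref{Sets_Conjecture2}, which is likewise left open. So there is no ``paper's own proof'' to compare your proposal against.

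Your reduction is correct and follows the paper's framework exactly: using $G_{L,2}(q)=H^*_{L,2}(q)/(1-q^L)$ and the non-negativity of $1/(1-q^L)$, it suffices to show the numerator is non-negative, and your combinatorial interpretation of the numerator in terms of $\mathcal{C}^*_{L,2,1}$ and $\mathcal{C}_{L,2,2}$ is accurate, including the extra $\delta_{N,L+3}$ term coming from $-q^{L+3}$. But this reduction lands you squarely on the $s=2$ instance of Conjecture~\ref{Sets_Conjecture2} (with the correction terms made explicit), which the authors themselves could not settle.

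The genuine gap is that your proposal does not construct the injection; it only sketches a wish list for one. The fragments you give (``peel one copy of $s$ and replace it by $2+(s-2)$'', ``etc.'', ``the residue class of the frequency of $2$ \dots should identify which sub-case'') are heuristics, not a definition, and the hard cases---when the natural split would create the forbidden part $L$, when $s=3$ and multiple copies must be consumed, and the verification that a reserve partition of norm $L+3$ always survives---are precisely where the analogous maps $\Gamma_1,\Gamma_2$ in Theorem~\ref{Ali_comb_THM2} required the most delicate casework. Until that injection is written down in full and its injectivity checked case by case, what you have is a plausible plan of attack on an open problem, not a proof.
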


A more general analytic conjecture, which contains Conjecture~\ref{F_2_conj}, is discussed in Section~\ref{Section_Outlook}.

\section{Transformations of the Analytic Refined Weighted Identity}\label{Section_Refinement}

We now shift our focus to the analytic version of Theorem~\ref{Smallest_part_positivity_THM}. The sum in the statement \eqref{weighted_sum_of_bounded_difference} can be written in an equivalent analytical form \begin{equation}
\label{analytic_weighted_sum_of_bounded_difference} \sum_{\substack{\pi\in\mathcal{U}\\l-s\leq L}} (-1)^{s+1}q^{|\pi|} = \sum_{n\geq 1} \frac{q^n}{1+q^n} {L + (n-1) \brack (n-1) }_q,
\end{equation} similar to the discussion of \eqref{FL_second_sum}. The connection between expressions \eqref{weighted_sum_of_bounded_difference} and \eqref{analytic_weighted_sum_of_bounded_difference} is the first Heine transformation \eqref{Heine1}, which will be more openly discussed after the following theorem.

One can also write the last term of the right-hand side of \eqref{abstract_positivity_of_alternating_smallest_part} by focusing on the smallest parts \begin{equation}\label{odds_more_than_one}\sum_{\substack{\pi\in\mathcal{U}\\s>1\\ s\equiv 1 (\text{mod}\,2)\\l-s\leq L}} q^{|\pi|} = \sum_{n\geq 1} \frac{q^{2n+1}}{(q^{2n+1};q)_{L+1}}. \end{equation} Rewriting the terms analytically in \eqref{abstract_positivity_of_alternating_smallest_part} by plugging the expression \eqref{H_secondary} in \eqref{F_H_relation}, and copying \eqref{analytic_weighted_sum_of_bounded_difference} and \eqref{odds_more_than_one} in their respective places yields:

\begin{theorem}
\begin{align}
\sum_{n\geq 1} \frac{q^n}{1+q^n} {L + (n-1) \brack (n-1) }_q
\\\nonumber = \sum_{s=2}^{L} \frac{q^{2s+1}}{(1-q^L)(q^s;q)_{L+2-s}}  &+ \frac{q}{(1-q^L)(1-q^{L+1})} + \frac{q^3(1-q^{L-2})}{(1-q^L)(q;q)_{L+1}} + 2\cdot \sum_{n\geq 1} \frac{q^{2n+1}}{(q^{2n+1};q)_{L+1}}.
\end{align}
\end{theorem}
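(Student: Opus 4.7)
The plan is to follow exactly the roadmap sketched in the paragraph immediately preceding the statement: recognize the identity as the result of substituting three already-established formulas into the decomposition \eqref{abstract_positivity_of_alternating_smallest_part}. Concretely, I will show that both sides of the claimed equality are two different analytic encodings of the single weighted partition sum $\sum_{\pi\in\U,\, l(\pi)-s(\pi)\leq L}(-1)^{s(\pi)+1}q^{|\pi|}$.

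First I would recast the left-hand side as this weighted sum. Group the partitions contributing to it by their total number of parts $n = \nu(\pi)$. If the smallest part is $s$ and every part lies in $\{s,s+1,\ldots,s+L\}$, then subtracting $s$ from every part yields a partition with at most $n-1$ parts of size at most $L$, whose generating function is ${L+(n-1)\brack n-1}_q$; the removed smallest-part column contributes $q^{ns}$. Summing $(-1)^{s+1}q^{ns}$ over $s\geq 1$ produces the factor $q^n/(1+q^n)$, and we recover \eqref{analytic_weighted_sum_of_bounded_difference}, which is the left-hand side of the theorem.

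Next I would apply the smallest-part dichotomy \eqref{abstract_positivity_of_alternating_smallest_part},
\[
\sum_{\substack{\pi\in\U\\ l-s\leq L}}(-1)^{s+1}q^{|\pi|} \;=\; G_{L,1}(q) + 2\sum_{\substack{\pi\in\U\\ s>1,\ s\text{ odd}\\ l-s\leq L}}q^{|\pi|},
\]
and substitute the three closed forms already in hand. By Theorem~\ref{F_positivity_theorem}, $G_{L,1}(q) = H_{L,1}(q)/(1-q^L)$; by the manifestly non-negative expression \eqref{H_secondary},
\[
H_{L,1}(q) = \sum_{s=2}^{L}\frac{q^{2s+1}}{(q^s;q)_{L+2-s}} + \frac{q}{1-q^{L+1}} + \frac{q^3(1-q^{L-2})}{(q;q)_{L+1}};
\]
and by \eqref{odds_more_than_one}, the second sum equals $\sum_{n\geq 1} q^{2n+1}/(q^{2n+1};q)_{L+1}$. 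Dividing each term of $H_{L,1}(q)$ by $1-q^L$ and appending twice the odd-smallest-part contribution yields, term by term, the right-hand side of the statement.

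Because every ingredient has already been established earlier in the paper, no substantive obstacle remains: the argument is pure bookkeeping. The only minor care points are keeping the factor of $2$ in front of the odd-smallest-part sum and applying \eqref{odds_more_than_one} with the correct range (smallest part $s=2n+1$, $n\geq 1$, so $s>1$ and odd). A quick check at $L=1$, where the sum $\sum_{s=2}^{L}$ is empty and $H_{L,1}(q)$ reduces to $q/(1-q^2) + q^3/(q;q)_2 \cdot 0$, confirms that all components of \eqref{H_secondary} and Theorem~\ref{F_positivity_theorem} remain meaningful, so the identity holds for every $L\geq 1$ with no additional restriction.
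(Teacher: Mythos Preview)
Your proof is correct and follows exactly the paper's own approach: the theorem appears immediately after the sentence ``Rewriting the terms analytically in \eqref{abstract_positivity_of_alternating_smallest_part} by plugging the expression \eqref{H_secondary} in \eqref{F_H_relation}, and copying \eqref{analytic_weighted_sum_of_bounded_difference} and \eqref{odds_more_than_one} in their respective places yields,'' which is precisely your assembly of ingredients. One tiny slip in your $L=1$ sanity check: the third term of \eqref{H_secondary} is not zero there but $q^3(1-q^{-1})/(q;q)_2=-q^2/(1-q^2)$, which together with $q/(1-q^2)$ still collapses to the expected $H_{1,1}(q)=q/(1+q)$, so your conclusion stands.
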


Moreover, one can write the expression \[ \sum_{n\geq 1} \frac{q^n}{1+q^n} {L + (n-1) \brack (n-1) }_q\] $q$-hypergeometrically. Note that \[\frac{1+q}{1+q^n} = \frac{(-q;q)_{n-1}}{(-q^2;q)_{n-1}}.\] Now it is easy to see that  \begin{equation}\label{qHypergeometric_l_s_less_than_L}\sum_{n\geq 1} \frac{q^n}{1+q^n} {L + (n-1) \brack (n-1) }_q = \frac{q}{1+q} {}_2 \phi_1 \left( \genfrac{}{}{0pt}{}{q^{L+1},\ -q}{-q^2} \; ; q,\ q \right).\end{equation} Applying the first Heine transformation \eqref{Heine1} to this ${}_2\phi_1$, we get the $q$-series of \eqref{weighted_sum_of_bounded_difference}.

Applying the Jackson transformation \eqref{Jackson_transformation} to \eqref{qHypergeometric_l_s_less_than_L} gives the identity \begin{equation}\label{q_hypergeometric_Jackson_transformation}\frac{q}{1+q} {}_2 \phi_1 \left( \genfrac{}{}{0pt}{}{q^{L+1},\ -q}{-q^2} \; ; q,\ q \right)= \frac{q}{1+q}\frac{(q^{L+2};q)_\infty}{(q;q)_\infty} {}_2 \phi_2 \left( \genfrac{}{}{0pt}{}{q^{L+1},\ q}{-q^2,\ q^{L+2}} \; ; q,\ -q^2 \right).\end{equation} After elementry simplifications and shifting the summation variable $n\mapsto n-1$, we arrive at the identity \begin{equation}\label{Jackson_transformed_equation}\sum_{n\geq 1} \frac{q^n}{1+q^n} {L + (n-1) \brack (n-1) }_q = \frac{1}{(q;q)_L} \sum_{n\geq 1} \frac{q^{n+1\choose 2}}{(-q;q)_n(1-q^{L+n})}.\end{equation} The right-hand side expression in \eqref{Jackson_transformed_equation}, similar to \eqref{analytic_weighted_sum_of_bounded_difference}, is not manifestly positive at first sight, but Theorem~\ref{Smallest_part_positivity_THM} carries over and proves the positivity.

\begin{theorem}\label{Jackson_transformed_positivity} For $L\geq 1$, 
\begin{equation*} \frac{1}{(q;q)_L} \sum_{n\geq 1} \frac{q^{n+1\choose 2}}{(-q;q)_n(1-q^{L+n})} \succcurlyeq 0.\end{equation*}
\end{theorem}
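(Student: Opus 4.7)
The plan is to observe that the expression in the statement is already identified, via the Jackson transformation chain leading to equation \eqref{Jackson_transformed_equation}, with the sum
\[
\sum_{n\geq 1} \frac{q^n}{1+q^n} {L + (n-1) \brack (n-1) }_q,
\]
so the entire task is to read off non-negativity from what has already been established, not to redo any hypergeometric manipulation.

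First, I would invoke \eqref{Jackson_transformed_equation} directly: it asserts the equality
\[
\sum_{n\geq 1} \frac{q^n}{1+q^n} {L + (n-1) \brack (n-1) }_q = \frac{1}{(q;q)_L} \sum_{n\geq 1} \frac{q^{n+1\choose 2}}{(-q;q)_n(1-q^{L+n})},
\]
so proving non-negativity of the right-hand side is the same as proving non-negativity of the left-hand side. Next, I would appeal to \eqref{analytic_weighted_sum_of_bounded_difference}, which identifies the left-hand side combinatorially as the weighted partition generating function
\[
\sum_{\substack{\pi\in\mathcal{U}\\l(\pi)-s(\pi)\leq L}} (-1)^{s(\pi)+1}q^{|\pi|}.
\]
At this point the claim becomes exactly the content of Theorem~\ref{Smallest_part_positivity_THM}, which has already been proved (via $G_{L,1}(q)\succcurlyeq 0$ from Theorem~\ref{F_positivity_theorem}, plus the manifestly non-negative remainder in \eqref{abstract_positivity_of_alternating_smallest_part}).

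In short, the proof is a two-line chain: equation \eqref{Jackson_transformed_equation} converts the $q$-hypergeometric expression into the weighted partition sum, and Theorem~\ref{Smallest_part_positivity_THM} supplies the non-negativity. There is no substantial obstacle; the only thing one has to be mindful of is the common factor $1/(q;q)_L$, which is itself a series with non-negative coefficients, so multiplication preserves the $\succcurlyeq 0$ property and does not need to be stripped off. In this sense Theorem~\ref{Jackson_transformed_positivity} should be presented as a corollary of Theorem~\ref{Smallest_part_positivity_THM} filtered through the Jackson transformation, emphasizing that its interest lies not in new combinatorial content but in producing a second, genuinely different-looking analytic expression whose manifest non-negativity is still hidden.
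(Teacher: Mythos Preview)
Your proposal is correct and is exactly the approach the paper takes: the sentence immediately preceding Theorem~\ref{Jackson_transformed_positivity} says that Theorem~\ref{Smallest_part_positivity_THM} ``carries over and proves the positivity'' via \eqref{Jackson_transformed_equation} and \eqref{analytic_weighted_sum_of_bounded_difference}. Your remark about the factor $1/(q;q)_L$ is slightly superfluous, since that factor is already built into the equality \eqref{Jackson_transformed_equation}, but it does no harm.
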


One more interesting equivalent expression to the ones of \eqref{Jackson_transformed_equation} ---still not manifestly positive--- is the outcome of the third Heine transformation \eqref{Heine3} of the left side of \eqref{q_hypergeometric_Jackson_transformation}. After the necessary simplifications and a shift in summation, the Heine transformation \eqref{Heine3} gives \begin{equation}\label{intermediate_term}\frac{1}{(q;q)_L}\sum_{n\geq1}\frac{(-q^{1-L};q)_{n-1}}{(-q;q)_n}q^{L(n-1)+n},\end{equation} as an equal summation to that of \eqref{Jackson_transformed_equation}. The expression \eqref{intermediate_term} is interesting in its own right. It is clear that it has different summands than either side of \eqref{Jackson_transformed_equation}. Yet, when $L=0$, it matches term-by-term with the left-hand side expression \[\sum_{n\geq 1} \frac{q^n}{1+q^n} {L + (n-1) \brack (n-1) }_q,\] evaluated at $L=0$, and when $L\rightarrow\infty$, it matches term-by-term the right-hand side \[\frac{1}{(q;q)_L} \sum_{n\geq 1} \frac{q^{n+1\choose 2}}{(-q;q)_n(1-q^{L+n})},\] as $L\rightarrow\infty$ in the identity \eqref{Jackson_transformed_equation}. Therefore, to this extent, \eqref{intermediate_term} is the intermediate term in \eqref{Jackson_transformed_equation}.

\section{Outlook}\label{Section_Outlook}
One project to pursue is to identify the statistics $t_L(\pi)$ for partitions, which would be the refined statistics of $t(\pi)$ of Theorem~\ref{Ordinary_Partitions_Combinatorial_Weighted_Theorem} for partitions with the difference between the largest and the smallest parts bounded by $L$. As it stands, going from Theorem~\ref{Ordinary_Partitions_Combinatorial_Weighted_Theorem} to Theorem~\ref{Smallest_part_positivity_THM} we lose grasp of the non-negative statistics $t(\pi)$.

Another question is related to the $H_{L,1}(q)$ and $H_{L,2}(q)$ functions of Section~\ref{Section_non_negativity}. Recall that a series $\sum_{n\geq 0} a_n q^n$ is called \textit{eventually positive} if there is some $k$ such that $a_n >0$ for all $n > k$. Theorem~\ref{H_positivity_theorem} and Theorem~\ref{H_positivity_theorem2} seem to be the initial steps of a eventually positive family of $q$-products. Let \begin{equation*} H_{L,s,k} (q) = \frac{q^s(1-q^{k})}{(q^s;q)_{L+1}} - \left(\frac{1}{(q^{s+1};q)_{L}} - 1\right),\end{equation*} then we have the following claim:

\begin{conjecture}\label{H_conj} For $L$ and $k\geq s+1$,
\begin{center} $H_{L,s,k}(q)$ is eventually positive. \end{center}
\end{conjecture}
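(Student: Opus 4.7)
First I would combine the two defining summands and pull out the zero at $q=1$ to make the pole structure visible. With $[m]_q:=(1-q^m)/(1-q)$, direct algebra (using $2q^s-q^{s+k}-1=(1-q)(q^s[k]_q-[s]_q)$ and $(q^s;q)_{L+1}=(1-q)^{L+1}\prod_{j=0}^L[s+j]_q$) gives the convenient form
\[
H_{L,s,k}(q)\;=\;\frac{2q^s-q^{s+k}-1}{(q^s;q)_{L+1}}+1\;=\;\frac{q^s[k]_q-[s]_q}{(1-q)^L\prod_{j=0}^L[s+j]_q}+1.
\]

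Second, I would read off the leading pole. Evaluating at $q=1$ gives $q^s[k]_q-[s]_q\mapsto k-s\ge 1$ (using the hypothesis $k\ge s+1$) and $\prod_{j=0}^L[s+j]_q\mapsto (s+L)!/(s-1)!$, both nonzero. So $H_{L,s,k}(q)$ has a pole of exact order $L$ at $q=1$ with positive leading coefficient $C:=(k-s)(s-1)!/(s+L)!$. Via the expansion $1/(1-q)^L=\sum_n\binom{n+L-1}{L-1}q^n$, this pole contributes $C\binom{n+L-1}{L-1}\sim \frac{C}{(L-1)!}n^{L-1}$ to $[q^n]H_{L,s,k}(q)$, a positive polynomial of degree $L-1$ in $n$.

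Third, I would control the remaining poles on $|q|=1$. At any primitive $d$-th root of unity $\zeta\ne 1$, the pole order of $H_{L,s,k}(q)$ equals the multiplicity of $\zeta$ as a zero of $\prod_{j=0}^L[s+j]_q$, namely $|\{j\in\{0,\dots,L\}:d\mid s+j\}|\le\lceil(L+1)/d\rceil$. When $L$ is large enough that $\lceil(L+1)/2\rceil<L$ (which first holds at $L=3$), every such secondary pole has order strictly less than $L$, so each contributes only an oscillatory term of polynomial degree less than $L-1$ in $n$. Combining all contributions via partial fractions then yields
\[
[q^n]H_{L,s,k}(q)\;=\;\frac{C}{(L-1)!}\,n^{L-1}+O\bigl(n^{L-2}\bigr),
\]
which is strictly positive for all sufficiently large $n$, establishing the claimed eventual positivity.

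The hard part will be the small-$L$ borderline, where some $\zeta\ne 1$ (typically $\zeta=-1$ with $s$ even) supports a pole of the same order $L$ as the one at $q=1$; the two leading $n^{L-1}$ contributions can then combine with opposite signs on an arithmetic progression of $n$, and indeed a direct expansion shows that $H_{2,2,3}(q)$ has a negative coefficient at every odd $n\ge 3$. Thus an implicit hypothesis such as $L\ge 3$ (more sharply, $\lceil(L+1)/2\rceil<L$) seems necessary for the conjecture as stated, and under such a hypothesis the routine bookkeeping of the subdominant oscillations closes the proof by the scheme above.
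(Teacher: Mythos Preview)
The statement you are addressing is labeled a \emph{conjecture} in the paper; no proof is given there. The paper only establishes the two special families $(L,s,k)=(L,1,L)$ and $(L,2,L+1)$ (its Theorems~\ref{H_positivity_theorem} and~\ref{H_positivity_theorem2}), and those by explicit combinatorial injections between partition sets, not by any analytic asymptotics. So there is no paper proof to compare against; you are attacking an open problem.

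That said, your approach is sound and genuinely settles the conjecture for $L\ge 3$. The rewriting
\[
H_{L,s,k}(q)=\frac{q^s[k]_q-[s]_q}{(1-q)^{L}\prod_{j=0}^{L}[s+j]_q}+1
\]
is correct; the pole at $q=1$ has exact order $L$ with positive leading constant $(k-s)\,(s-1)!/(s+L)!$, and for $L\ge 3$ every other root-of-unity pole has order at most $\lceil(L+1)/2\rceil<L$. Standard quasi-polynomial asymptotics for rational functions then give $[q^n]H_{L,s,k}(q)=\tfrac{(k-s)(s-1)!}{(s+L)!\,(L-1)!}\,n^{L-1}+O(n^{L-2})>0$ for all large $n$. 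Your counterexample $H_{2,2,3}(q)$ is also valid: the first term vanishes at every odd power while $1/((1-q^3)(1-q^4))$ contributes a positive coefficient at each odd $n\ge 7$ (and at $n=3$), so the series is never eventually positive.

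One remark on the hypothesis. The paper's phrase ``For $L$ and $k\ge s+1$'' is most naturally read, in light of the definition of $\mathcal{C}^*_{L,s,1}$ earlier (which requires $L\ge s+1$), as $L\ge s+1$ \emph{and} $k\ge s+1$. Under that reading your counterexample is excluded, and your argument in fact covers the sole remaining borderline case as well: $L=2$ forces $s=1$, and then $\{s,\dots,s+L\}=\{1,2,3\}$ contains at most one multiple of each $d\ge 2$, so every secondary pole has order $\le 1<2=L$. Hence, with the intended hypothesis, your partial-fraction scheme proves the full conjecture --- something the paper leaves open and approaches only through injections in special cases.
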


We have already proven the conjecture for the $(L,s,k)=(L,1,L)$ and $(L,2,L+1)$ families in Theorem~\ref{H_positivity_theorem} and \ref{H_positivity_theorem2}. The particular branch $H_{L,s,L+s-1}(q)$ is a natural generalization of the functions $H_{L,1}$ and $H_{L,2}$ mentioned in Section~\ref{Section_non_negativity}, and the non-negativity claim related to Conjecture~\ref{Sets_Conjecture}. All other triplets with $L=k\geq s+1$ are related to the Conjecture~\ref{Sets_Conjecture2} and Conjecture~\ref{F_2_conj}. Therefore, one can view Conjecture~\ref{H_conj} with the above relations as natural extension of these observations. For all other triplets $(L,s,k)$ are experimental.

The number of exceptional cases increases with $s$, making it less feasible to combinatorially study these functions for larger starting values $s$. More interestingly, the presence of a one-time exception at $q^9$ for the $(L,s,k)=(4,2,5)$ case ($H_{L,2}(q)$), which was handled in Theorem~\ref{Ali_comb_THM2}, also hints a higher degree of underlying complexity.

\section{Acknowledgment}

Authors would like to thank George Andrews for interest and helpful insights.  We are grateful to William Severa for his careful reading of the manuscript.

%

\end{document}